\newcommand{\email}[1]{\protect\href{mailto:#1}{#1}}
\newtheorem{theorem}{Theorem}[section]
\newtheorem{proposition}[theorem]{Proposition}
\newtheorem{lemma}[theorem]{Lemma}
\theoremstyle{definition}
\newtheorem{definition}[theorem]{Definition}
\newtheorem{example}[theorem]{Example}
\newtheorem{remark}[theorem]{Remark}
\newcommand{\N}{\mathbb{N}}
\newcommand{\Z}{\mathbb{Z}}
\newcommand{\R}{\mathbb{R}}
\newcommand{\C}{\mathbb{C}}
\newcommand{\T}{\mathbb{T}}
\newcommand{\di}{\,\mathrm{d}}
\newcommand{\expon}[1]{\mathrm{e}^{#1}}
\newcommand{\imag}{\mathrm{i}}
\newcommand{\smallnorm}[1]{\lVert#1\rVert}
\newcommand{\norm}[1]{\left\lVert#1\right\rVert}
\newcommand{\abs}[1]{\left\lvert#1\right\rvert}
\newcommand{\enorm}[1]{\abs{#1}}
\newcommand{\domainsymb}{\mathcal{D}}
\newcommand{\rangesymb}{\mathcal{R}}
\newcommand{\domain}[1]{\domainsymb({#1})}
\newcommand{\range}[1]{\rangesymb({#1})}
\newcommand{\restrict}[2]{{\left.\kern-\nulldelimiterspace #1\right|_{#2}}}
\newcommand{\characteristic}[1]{\mathbbm{1}_{#1}}
\newcommand{\ball}{\mathcal{B}}
\newcommand{\ballorigin}[1]{\ball\left({#1}\right)}
\newcommand{\fourier}{\mathcal{F}}
\newcommand{\fhat}[1]{\widehat{#1}}
\newcommand{\pseudoinv}[1]{{#1}^\dagger}
\DeclareMathOperator*{\argmin}{arg\,min}
\title{On $L^\infty$ stability for wave propagation \\ and for  linear inverse problems}
\author{
Rima Alaifari\thanks{
                Department of Mathematics, RWTH Aachen University
(\email{rima.alaifari@rwth-aachen.de}). Work done while at Seminar for Applied Mathematics, Department of Mathematics, ETH Zurich.
                }
\and Giovanni S.\ Alberti\thanks{
                Machine Learning Genoa Center (MaLGa), Department of Mathematics, Department of Excellence 2023–2027, University of Genoa (\email{giovanni.alberti@unige.it}).
                }
\and Tandri Gauksson\thanks{Seminar for Applied Mathematics, Department of Mathematics, ETH Zurich
(\email{tandri.gauksson@decode.is}). Current address: deCODE genetics,  Reykjavík, Iceland.}}
\date{December 1, 2025}
\begin{document}

\maketitle

\setcitestyle{authoryear,round,semicolon}

\begin{abstract}

Stability is a key property of both forward models and inverse problems, and depends on the norms considered in the relevant function spaces. For instance,  stability estimates for hyperbolic partial differential equations are often based on energy conservation principles, and are therefore expressed in terms of $L^2$ norms. The focus of this paper is on stability with respect to the $L^\infty$ norm, which is more relevant to detect localized phenomena.  The linear wave equation is not stable in  $L^\infty$, and we design an alternative solution method based on the regularization of Fourier multipliers, which is stable in $L^\infty$. Furthermore, we show how these ideas can be extended to inverse problems, and design a regularization method for the inversion of compact operators that is stable in $L^\infty$. We also discuss the connection with the stability of deep neural networks modeled by hyperbolic PDEs.

\end{abstract}

\setcitestyle{numbers,square,comma}


\section{Introduction}\label{sec:intro}

One of the key mathematical questions regarding the study of ordinary differential equations (ODEs) and of partial differential equations (PDEs) is the continuous dependence of the solutions on the initial data. This property guarantees that the solution to a certain ODE or PDE is stable with respect to small perturbations of the initial data. The absence of this stability is at the core of chaos theory, which studies those dynamical systems that are highly sensitive to the initial conditions: a very small change can lead to a very different outcome. However, many physical models do satisfy this stability property.
As simple examples, we can consider the linear heat or wave equations, which are second-order PDE: for these problems, such continuous dependence holds, and because of the linearity, it can be expressed by the boundedness of the linear operator
\[
\text{initial conditions $\;\longmapsto\;$ solution.}
\]
As a consequence, we have Lipschitz stability of the solutions with respect to the initial conditions.

Let us focus on hyperbolic equations. One of the key tools employed to derive stability  estimates is the energy conservation principle \cite{dautray-lions-5,evans2022partial}. As a result, these estimates are written in terms of $L^2$-based norms. 
In order to treat general $L^p$ norms, different methods are needed. For example, in the case of the linear wave equation with constant coefficients, the stability estimates correspond to the boundedness of Fourier multipliers between suitable $L^p$ spaces \cite{PERAL1980114,sogge1993p}.   In addition to the theoretical motivation, considering different norms may be relevant in the applications, as we will discuss below. In particular, the $L^\infty$ norm quantifies features (like localized spikes) that may be qualitatively different from those measured by the $L^2$ norm (see e.g.\  Figs.~\ref{fig:weq-pert} and \ref{fig:weq-signalpert} below).

In this work, we consider the linear wave equation with constant coefficient as a prototypical example. Stability in $L^\infty$ does not hold: if we consider the $\|\cdot\|_\infty$ norm, a small perturbation of the initial condition $u_0$ can lead to arbitrarily large deviations at time $t=1$. In order to overcome this issue, we introduce a regularized solution of the PDE. This is constructed by means of a double filter (in space and in frequency) of the corresponding Fourier multiplier. Intuitively, and in simple terms, this gives rise to a family of regularized solution operators $B_{\alpha,\beta}$  that are bounded in $L^\infty$ and that approximate, as $\alpha,\beta\to 0$, the true solution $u$, namely, $B_{\alpha,\beta} u_0 \to u$ in $L^\infty$.

Further, we extend these ideas to inverse problems, in which a similar issue arises:
 classical regularization techniques give rise to reconstruction algorithms that are bounded in the relevant Hilbert space norms, such as $L^2$, but may be unbounded with respect to other norms, such as $L^\infty$. Regularization methods in Banach spaces have been studied extensively, but mostly with a focus on the case $p=1$, promoting sparsity (see \cite{SchusterKaltenbacherHofmannKazimierski+2012} and the references therein). In this paper, we take inspiration from the approach developed for the wave equation and design a regularization method for inverse problems that is stable in $L^\infty$. This is achieved by considering a nonstandard spectral filtering of the singular value decomposition.

The results of this paper are relevant for the study of the stability of deep neural networks (DNNs). The instability of DNNs is linked to the existence of \emph{adversarial examples}, first observed in the context of image recognition with deep neural networks \cite{biggio2013evasion,szegedy2013intriguing,goodfellow2014explaining,moosavi2016deepfool,madry2017towards,carlini2017towards}.
These are images that have been perturbed intentionally and imperceptibly so that the network makes an incorrect prediction.
More recently, and naturally, adversarial examples have also found their way to the domain of deep learning for inverse problems \cite{antun2020instabilities,genzel2020solving,gottschling2020troublesome}.
While  these perturbations are usually quantified in terms of the $L^2$ norm, several works have studied perturbations measured in different ways \cite{xiao2018spatially,alaifari2019adef,alaifari2023localized}, which may be qualitatively more relevant for human perception.

When both the layers and the spatial variable are seen as continuous, the action of a residual convolutional neural network (CNN) can be seen as the input-to-solution map associated to a nonlinear time-dependent PDE \cite{haber-ruthotto-2020}. In particular, hyperbolic CNNs were introduced by using hyperbolic PDEs with the aim of obtaining an energy preserving propagation and, consequently, a stable network with respect to the $L^2$ norm. The derivations in this paper show, for a simpler linear problem, that direct extensions to the $L^\infty$ norm, which is more adapted to localized perturbations, are not possible. Furthermore, the proposed regularized solution offers a possible way to stabilize the network. The extension to more complicated nonlinear PDEs is needed to fully capture the nonlinearity of DNNs, but cannot be treated with the current methods; it is an interesting avenue for future research.

The paper is structured as follows. In Section~\ref{sec:deep} we expand upon the connections to the stability of deep neural networks. In Section~\ref{sec:adv} we introduce the notion of adversarial example for linear operators, focusing on the role played by the different norms involved. In Section~\ref{sec:wave} we consider, as motivating example, the linear wave equation, and propose a method for the $L^\infty$ regularization of Fourier multipliers. In Section~\ref{sec:IP}, we adapt these ideas for the regularization of inverse problems. Finally, some proofs are collected in \ref{sec:appendix_proofs}.

\section{Connections to the stability of deep neural networks}\label{sec:deep}

We begin by briefly discussing the link between feed-forward deep neural networks (DNN) and hyperbolic equations, and why the results of this paper can provide interesting insights for the study of the stability of DNNs. 

Let us start by introducing DNNs; the reader is referred to \cite{Goodfellow-et-al-2016,higham-etal-2019} for additional details on the basics of DNNs. A DNN is a function  $f_\theta\colon\R^{n_0}\to\R^{n_N} $ that can be written as a composition of $N$ layers:
\[
f_\theta(u_0)=u_N,
\]
where
\begin{equation}\label{eq:dnn-layer}
u_{j}=f_{\theta_j}(u_{j-1}),\qquad j=1,\dots,N.
\end{equation}
Here, $f_{\theta_j}\colon\R^{n_{j-1}}\to\R^{n_{j}}$ describes the action of the $j$-th layer, and is a map parametrized by $\theta_j\in\R^{p_j}$. We collect all the weights $\theta_j$ into a single vector $\theta\in\R^p$, which is typically learned during training. The most common choice for $f_{\theta_j}$ is 
\[
f_{\theta_j}(s) = \sigma(W_j s+b_j),
\]
where $W_j\in\R^{n_j\times n_{j-1}}$, $b_j\in \R^{n_j}$ is a bias term, and $\sigma\colon\R\to\R$ is a fixed nonlinearity, such as the rectified linear unit $\sigma(y)=\max(y,0)$, acting pointwise. In the case when no additional structure is imposed on the weights, the free parameters are $\theta_j = (W_j,b_j)$, and the network is called fully connected, because every node of the output is linked to every node in the input through the linear map $W_j$. It is also possible to restrict the choice of the maps $W_j$: for example, considering convolutions allows for a reduction in the number of free parameters, and yields translation invariant (often called equivariant) layers.  
An alternative to \eqref{eq:dnn-layer} was proposed in \cite{ResNets} in order to learn a perturbation of the identity only, namely
\begin{equation}\label{eq:resnet-layer}
u_{j}=u_{j-1}+f_{\theta_j}(u_{j-1}),\qquad j=1,\dots,N.
\end{equation}
Here, for simplicity we consider only the case when all the dimensions $n_j$ coincide, and $f_\theta,f_{\theta_j}\colon\R^d\to\R^d$. This choice gives rise to the so-called residual neural networks (ResNets).

It is possible to view the discrete variable $j$ parametrizing the layers of a ResNet as a discretization of the continuous time of a dynamical system \cite{e-2017,haber-ruthotto-2018,neuralodes-2018}. More precisely, \eqref{eq:resnet-layer} can be seen as the forward Euler step (with step size $1$) of the Cauchy problem
\begin{equation}\label{eq:cauchy}
    \left\{
\begin{array}{l}
    \partial_t u(t) =f_{\theta_t}(u(t)),\qquad t\in (0,T],  \\
     u(0)=u_0.
\end{array}
    \right.
\end{equation}
The output of the network is given by the solution at time $t=T$, namely
$f_\theta(u_0) = u(T)$. Furthermore, we can view the vectors $u\in\R^d$ as discretizations of functions $u$ of two or three variables, modeling 2D and 3D signals, respectively. In this continuous settings, convolutions with filters with small support become differential operators \cite{haber-ruthotto-2020}. As a consequence, the maps $f_{\theta_t}$ can be taken as certain, possibly nonlinear, differential operators, acting on the function $u$, and \eqref{eq:cauchy} becomes an initial value problem for a PDE, called parabolic CNN in \cite{haber-ruthotto-2020}. The simplest example in this setting is associated to $f_{\theta_t} u=\Delta u$, giving rise to the (linear) heat equation. Considering time/spatial-dependent, possibly nonlinear, differential operators with learnable parameters gives rise to more complicated evolutions.

Heat-type equations tend to dissipate energy. For energy preservation, it is natural to consider hyperbolic CNNs, which take the form
\begin{equation}\label{eq:hyperbolic}
    \left\{
\begin{array}{l}
    \partial^2_t u(t) =f_{\theta_t}(u(t)),\qquad t\in (0,T],  \\
     u(0)=u_0.
\end{array}
    \right.
\end{equation}
As above, the trivial choice $f_{\theta_t} u=\Delta u$ corresponds to the (linear) wave equation with constant coefficients, and other choices give rise to more complicated, possibly nonlinear, hyperbolic PDEs.

The theoretical results of \cite{haber-ruthotto-2020} show that both PDEs \eqref{eq:cauchy} and \eqref{eq:hyperbolic} have Lipschitz stability properties, in the sense that
\begin{equation*}
    \|u^{(1)}(T)-u^{(2)}(T)\|_2\le C \|u^{(1)}_0-u^{(2)}_0\|_2
\end{equation*}
for some $C>0$.
These estimates are useful in the context of stability of DNNs and, especially, for studying robustness against adversarial attacks, as discussed in Section~\ref{sec:intro}. Indeed, they guarantee that the output of a neural network depends continuously (in fact, in a Lipschitz way) on the input, and they quantify this dependence by the constant $C$. However, as we argue in Section~\ref{sec:intro}, the $L^2$ norm may not capture all the relevant deformations of a signal. For instance, the $L^\infty$ norm is more suited to capture localized perturbations. The results of this paper, and especially those presented in Section~\ref{sec:wave} address this issue in the very simple setting of the linear wave equation: stability in $L^\infty$ does not hold in general, and a filtering approach is introduced to overcome this issue. Our approach does not directly apply to more complicated nonlinear settings, which are interesting for applications to neural networks, but can serve as an initial investigation on this issue.

\section{Adversarial sequences for linear operators}\label{sec:adv}

The central theme of this work is that a bounded linear operator between Banach spaces may not be bounded with respect to other norms that are of interest.
Vectors in the domain of the operator that reveal this behaviour will be termed \emph{adversarial}.
In this section, we define this term formally in an abstract setting.

A \emph{Banach couple} is a pair of Banach spaces such that each can be continuously embedded in a common Hausdorff topological vector space.
The archetypal Banach couples are pairs of \(L^p\)-spaces, which are all continuously embedded in the space of (equivalence classes of) Lebesgue-measurable functions with the topology of local convergence in measure \cite{kalton2003interpolation}.
With an abuse of notation, we interpret all set operations on Banach couples as operations in the ambient topological vector space under the continuous embeddings.

\begin{definition}[Adversarial perturbations and sequences]
\label{def:weq:aps}
    Let \((X,X')\) and \((Y,Y')\) be two Banach couples, and let \(A\colon X\to Y\) be a bounded linear operator.
    Let \(r\in X\cap X'\) and let \((r_n)_{n\in\N}\) be a sequence in \(X\cap X'\).
    \begin{enumerate}[(i)]
        \item If \(Ar\notin Y'\), i.e., \(\norm{Ar}_{Y'}=\infty\), then we say that \(r\) is an \emph{adversarial perturbation} for \(A\) (relative to \(X'\) and \(Y'\)).
        \item If \((r_n)_{n\in\N}\) is bounded in both \(X\) and \(X'\), and \((Ar_n)_{n\in\N}\) is unbounded in \(Y'\),
        then we say that \((r_n)_{n\in\N}\) is an \emph{adversarial sequence} for \(A\) (relative to \(X'\) and \(Y'\)).
    \end{enumerate}
\end{definition}

\begin{example}
     Take \(X=X'=Y=L^2(\R)\), \(Y'=L^\infty(\R)\), and let \(A\) be 
    the Fourier transform \(A=\fourier\colon X\to Y\).
    Any square integrable function with an unbounded Fourier transform is an adversarial perturbation for \(\fourier\).
    Define the functions
    \(\rho_n=n\characteristic{[0,n^{-2}]}\)
    and \(r_n=\fourier^{-1}\rho_n\), \(n\in\N\).
    Then \(\norm{r_n}_{L^2(\R)}=\norm{\rho_n}_{L^2(\R)}=1\) and \(\norm{\rho_n}_{L^\infty(\R)}=n\) for all \(n\in\N\), so \((r_n)_{n\in\N}\) is an adversarial sequence.
\end{example}

Although adversarial perturbations exhibit a more dramatic effect, it can be convenient to construct adversarial sequences instead.
The following consequence of the closed graph theorem shows that existence of the latter implies existence of the former.

\begin{theorem}
\label{thm:advseqadvpert}
    Let \((X,X')\) and \((Y,Y')\) be two Banach couples
    and let \(A\colon X\to Y\) be a bounded linear operator.
    If there exists an adversarial sequence for \(A\) (relative to \(X'\) and \(Y'\)),
    then there exists an adversarial perturbation for 
    \(A\) (relative to \(X'\) and \(Y'\)).
\end{theorem}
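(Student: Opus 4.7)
The plan is to proceed by contrapositive: assume there is no adversarial perturbation for $A$ and deduce that every sequence bounded in $X \cap X'$ must have image bounded in $Y'$, contradicting the existence of an adversarial sequence. The natural tool is the closed graph theorem, applied to $A$ viewed as a map between the intersection spaces.

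First, I would recall the standard fact from interpolation theory that for a Banach couple $(X, X')$, the intersection $X \cap X'$ equipped with the norm $\norm{r}_{X \cap X'} = \max(\norm{r}_X, \norm{r}_{X'})$ is a Banach space. Indeed, a Cauchy sequence in $X \cap X'$ is Cauchy in both $X$ and $X'$, and the limits coincide in the ambient Hausdorff topological vector space because the embeddings are continuous. The same applies to $Y \cap Y'$. Now assume no adversarial perturbation exists; this means that for every $r \in X \cap X'$ we have $Ar \in Y'$, and since $r \in X$ we also have $Ar \in Y$, so that $A$ restricts to a well-defined linear map $\tilde A \colon X \cap X' \to Y \cap Y'$.

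Next, I would verify that $\tilde A$ has closed graph with respect to the intersection norms. Suppose $r_n \to r$ in $X \cap X'$ and $\tilde A r_n \to s$ in $Y \cap Y'$. Convergence in the intersection norms implies convergence in each component, so $r_n \to r$ in $X$. By the boundedness of $A \colon X \to Y$ we get $A r_n \to A r$ in $Y$. On the other hand $\tilde A r_n \to s$ in $Y \cap Y'$ yields $\tilde A r_n \to s$ in $Y$, and uniqueness of limits in $Y$ gives $Ar = s$, so $\tilde A r = s$. The closed graph theorem then furnishes a constant $C > 0$ with $\norm{\tilde A r}_{Y \cap Y'} \le C \norm{r}_{X \cap X'}$ for all $r \in X \cap X'$.

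Finally, I would derive the contradiction. If $(r_n)_{n \in \N}$ is an adversarial sequence, then $(r_n)$ is bounded in both $X$ and $X'$, hence bounded in $X \cap X'$. The estimate above forces $(A r_n)$ to be bounded in $Y \cap Y'$, and in particular bounded in $Y'$, contradicting the definition of an adversarial sequence. The main (though modest) obstacle is really just the bookkeeping of continuous embeddings needed to justify that $X \cap X'$ and $Y \cap Y'$ are Banach spaces and that the graph-closure argument is legitimate; once that is in place, the closed graph theorem does all the work.
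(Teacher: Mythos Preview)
Your proposal is correct and follows essentially the same approach as the paper: both argue by contrapositive, equip the intersections $X\cap X'$ and $Y\cap Y'$ with the max norm, verify that the restricted operator has closed graph, and invoke the closed graph theorem to obtain boundedness, which rules out adversarial sequences. The paper's write-up differs only cosmetically (it works with sequences in the graph rather than phrasing it as ``$r_n\to r$ and $\tilde A r_n\to s$''), so there is nothing substantive to compare.
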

\begin{proof}
See \ref{sec:appendix_proofs}.
\end{proof}

\begin{example}
\label{ex:domainchangeintegral}
    Theorem \ref{thm:advseqadvpert}
    does not hold if we lift the requirement in Definition \ref{def:weq:aps} 
    that adversarial sequences are bounded in the domain of the operator.
    Take
    \begin{equation*}
    X=L^1(\R),\quad
    X'=L^\infty(\R),\quad
    Y=Y'=\R,
    \end{equation*}
    and let \(A\) be the integration operator
    \begin{equation*}
        A\colon X\to Y,\ f\mapsto \int f.
    \end{equation*}
    Then \(A\) is bounded, as \(\norm{Af}_Y=\abs{\int f}\leq \int\abs{f}=\norm{f}_X\),
    and 
    it is not bounded with respect to \(\norm{\cdot}_{X'}\) on the intersection \(X\cap X'\)
    since the functions \(\characteristic{[0,n]}\in X\cap X'\) satisfy 
    \begin{equation*}
    \smallnorm{\characteristic{[0,n]}}_{X'}=1
    \quad\text{and}\quad
    \smallnorm{A\characteristic{[0,n]}}_{Y'}=\abs{\int\characteristic{[0,n]}}=n   
    \end{equation*}
    for all \(n\in \N\).
    However, no adversarial perturbation exists for \(A\) since \(\abs{Ar}<\infty\) for all \(r\in X\cap X'\).
    This example does not violate Theorem \ref{thm:advseqadvpert} because the sequence \((\characteristic{[0,n]})_{n\in\N}\) is not bounded in \(X\), and thus is not an adversarial sequence.
\end{example}

\section{\texorpdfstring{$L^\infty$}{L infty} stability of the wave equation  via  Fourier multipliers}\label{sec:wave}

In this section, we consider a Fourier multiplier operator 
\(B\colon L^2(\R^d)\to L^2(\R^d)\),
\begin{equation}\label{eq:multipliermu}
    Bf = 
    \fourier^{-1}\left(
    \mu\cdot \fourier{f}
    \right)
\end{equation}
with some \(\mu\in L^\infty(\R^d)\). Here, $\mathcal F\colon L^2(\R^d)\to L^2(\R^d)$ denotes the Fourier transform with the normalization $\mathcal{F}f(\xi)=\widehat f(\xi)=\int_{\R^d} f(x)e^{-2\pi i x\cdot \xi}\,dx$ for $f\in L^1(\R^d)\cap L^2(\R^d)$.
Such an operator $B$ is bounded, with \(\norm{B}_{L^2(\R^d)\to L^2(\R^d)}\leq \norm{\mu}_{L^\infty(\R^d)}\),
but may be unbounded when viewed under a different norm.
We wish to approximate \(B\) with an operator that is bounded in a stronger sense,
for example as an operator \(L^\infty(\R^d)\to L^\infty(\R^d)\) in addition to \(L^2(\R^d)\to L^2(\R^d)\).

\subsection{Instability in wave propagation}
\label{sec:weqzeroinit}
As a motivating example, consider the wave equation
\begin{align*}
    \partial_t^2 u(t) - \Delta u(t) & = 0\\
    u(0) & = f\\
    \partial_tu(0) & = 0,
\end{align*}
in \(\R^3\),
and the operator \(B\) which propagates the initial condition \(f\) up to time \(t=1\), namely, $Bf = u(1)$. Here, $u$ is a function of two variables, $t$ and $x$, and with an abuse of notation we write $u(t)$ for the function $u(t,\cdot)$.
Viewing the wave equation under the spatial Fourier transform,
\begin{equation*}
    \fhat{u}(t) = \cos\left(2\pi\enorm{\cdot}t\right)\fhat{f},\quad t\geq 0,
\end{equation*}
we verify that \(B\) is a Fourier multiplier of the form \eqref{eq:multipliermu} with $\mu(\xi) = \cos\left(2\pi |\xi|\right)$, hence a bounded linear operator \(L^2(\R^3)\to L^2(\R^3)\).
However,
it is simple to find bounded initial conditions that can become arbitrarily large in \(L^\infty(\R^3)\) at time \(t=1\).
For example,
consider smooth radially symmetric initial conditions,
\begin{equation*}
    f(x) = g(\enorm{x}),\quad g\colon [0,\infty)\to\R.
\end{equation*}
When evaluated at \(x=0\), the solution \(u\) satisfies
\begin{equation*}
    u(t,0)
    = g(t) + t\cdot g'(t)
\end{equation*}
(see e.g.\ Kirchhoff's formula \cite[\S 12]{vladimirov1971}).
Thus, by increasing the slope of \(g\) at \(\enorm{x}=1\), we can increase the \(L^\infty(\R^3)\) norm of \(Bf=u(1)\).
We construct an adversarial sequence (cf.\ Definition \ref{def:weq:aps}) relative to \(L^\infty(\R^3)\)
by setting,
for all \(n\in\N\),
\(f_n(x)=g_n(\enorm{x})\) with $g_n$ compactly supported and such that \(\abs{g_n(r)}\leq 1\), \(r\geq 0\),  as well as \(\abs{g_n'(1)}\xrightarrow[n\to\infty]{}\infty\).
Then, we get
\begin{equation*}
   \|Bf_n\|_{L^\infty(\R^3)}\ge \abs{Bf_n(0)} = \abs{g_n(1) + g_n'(1)}\xrightarrow[n\to\infty]{} \infty.
\end{equation*}
See Fig.~\ref{fig:weq-pert} for an example. Figure~\ref{fig:weq-signalpert} shows such a sequence as perturbations to a signal of larger amplitude.

\begin{figure}
    \centering
    \includegraphics[width=\textwidth]{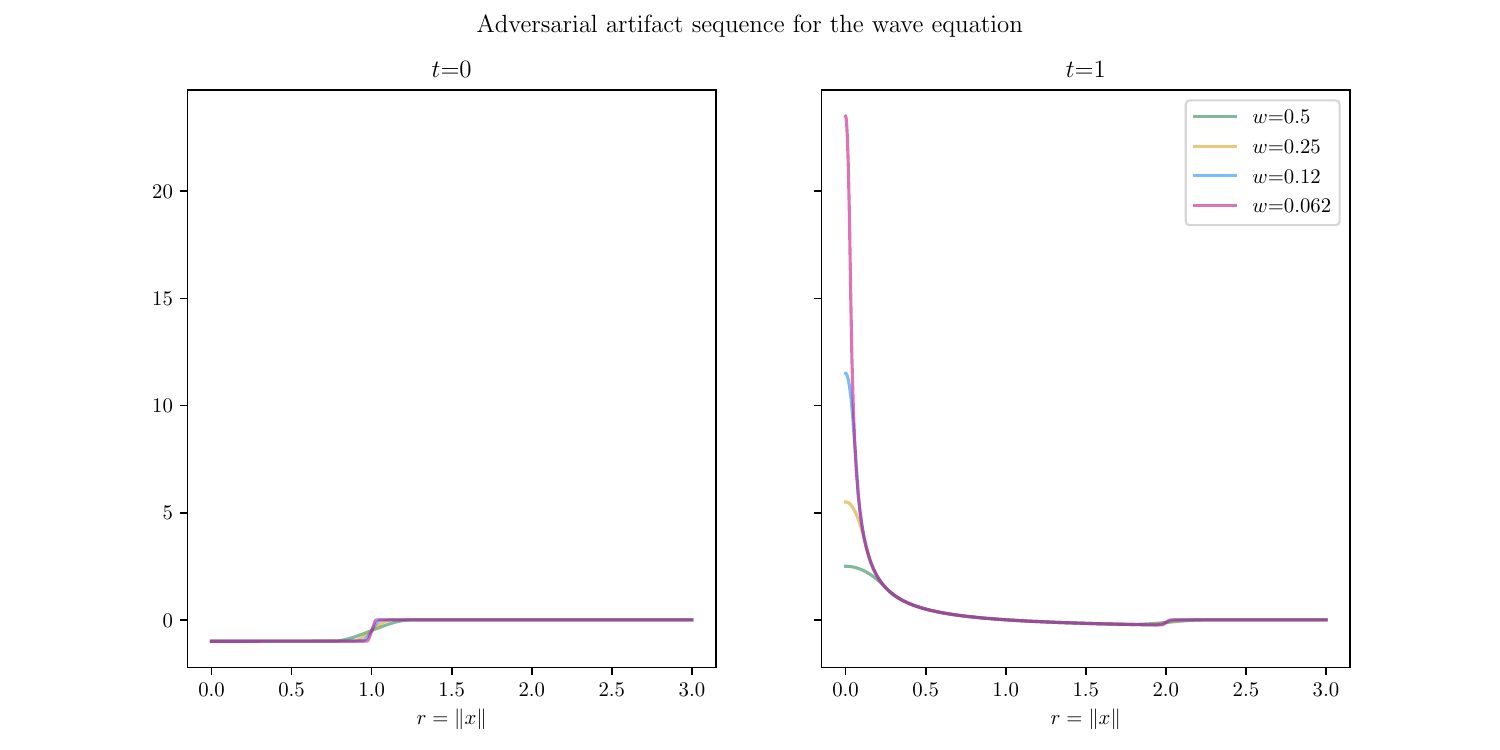}
    \caption{Radial component of spherical waves at initial time \(t=0\) (left) and end time \(t=1\) (right). The initial states \(f_n\), \(n=1,2,3,4\), are continuously differentiable functions that transition from the value \(-1\) to the value 0 in a window of width \(w=2^{-n}\), centered at \(r=1\). The increasingly steep transition causes the end states \(Bf_n\) to grow at \(r=0\) as \(n\) increases.}
    \label{fig:weq-pert}
\end{figure}

\begin{figure}
    \centering
    \includegraphics[width=\textwidth]{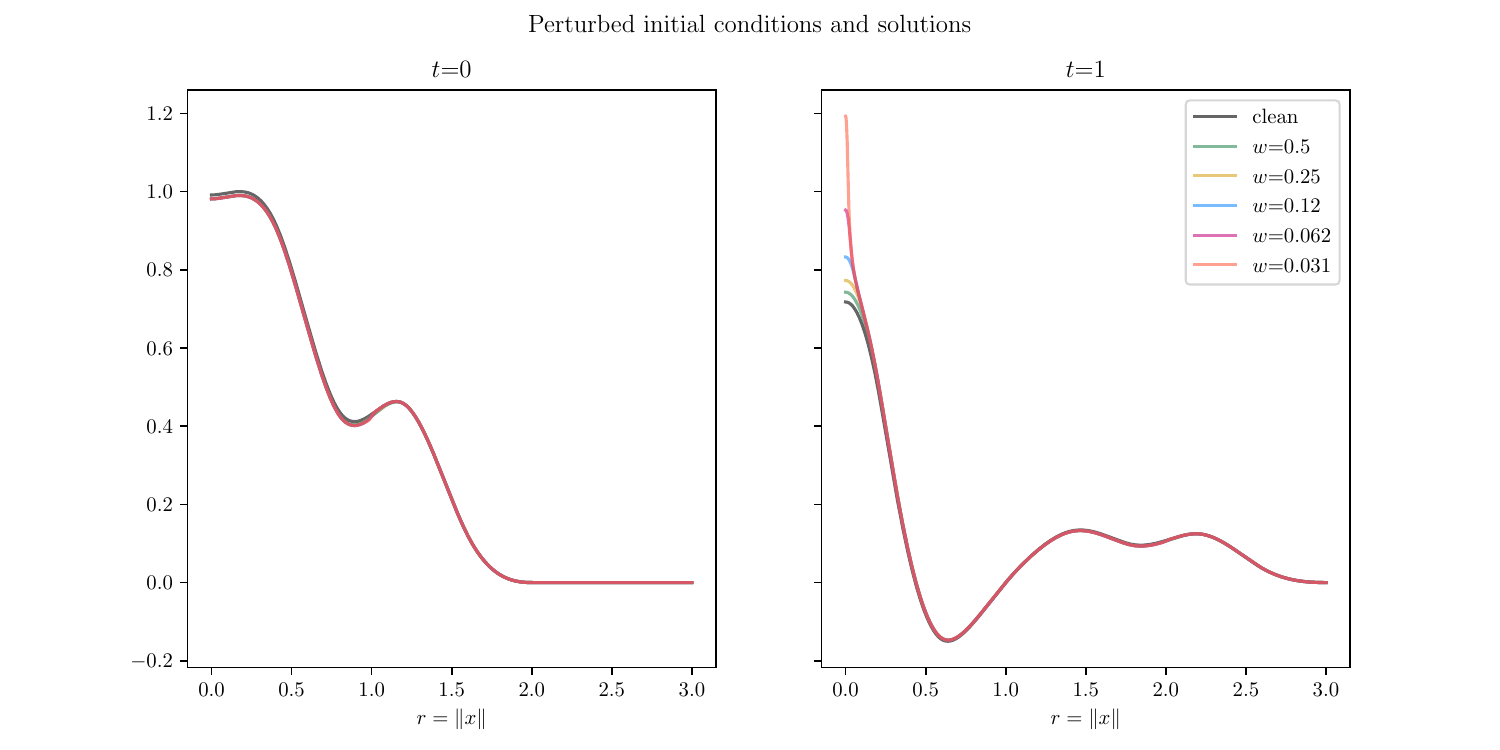}
    \caption{Radial component of spherical waves at initial time \(t=0\) (left) and end time \(t=1\) (right). The ``clean'' initial state is perturbed by $0.01\cdot f_n$, \(n=1,\ldots,5\), where \(f_n\) are the functions from Fig. \ref{fig:weq-pert}.}
    \label{fig:weq-signalpert}
\end{figure}

\subsection{Filtering in space and frequency}
\label{sec:regfouriermult}

In order to mitigate effects such as the adversarial sequences in Section \ref{sec:weqzeroinit},
we can employ filters in both frequency and space to approximate \(B\).

\begin{proposition}
    Let \((\kappa_\alpha)_{\alpha>0}\)
    and
    \((h_\beta)_{\beta>0}\)
    be two families of functions such that 
    \(\kappa_\alpha, h_\beta \in L^2(\R^d) \cap L^\infty(\R^d)\)
    for all
    \(\alpha,\beta >0\).
    Then the operator
    \begin{equation*}
        B_{\alpha,\beta}\colon
        L^p(\R^d)\to L^p(\R^d),
        \quad
        f\mapsto
        \fourier^{-1}\left(
        \mu\cdot 
        \kappa_\alpha
        \cdot 
        \fourier\left({h_\beta} f\right)
        \right)        
    \end{equation*}
    is well defined and bounded for every \(2\leq p\leq \infty\). In particular,
    \begin{align*}
        \|B_{\alpha,\beta}\|_{L^2(\R^d)\to L^2(\R^d)}\le \norm{
    \mu
    }_{L^\infty(\R^d)}
    \norm{
    \kappa_\alpha
    }_{L^\infty(\R^d)}
    \norm{
    {h_\beta}
    }_{L^\infty(\R^d)},\\
    \|B_{\alpha,\beta}\|_{L^\infty(\R^d)\to L^\infty(\R^d)}\le
    \norm{
    \mu
    }_{L^\infty(\R^d)}
    \norm{
    \kappa_\alpha
    }_{L^2(\R^d)}
    \norm{
    {h_\beta}
    }_{L^2(\R^d)}.
    \end{align*}
\end{proposition}
\clearpage
\begin{proof}
Straightforward computation,
using
H\"older's inequality and
the Plancherel theorem,
gives
\begin{align*}
    \norm{B_{\alpha,\beta}f}_{L^2(\R^d)}
    &=
    \norm{\mu
    \kappa_\alpha
    \fourier\left({h_\beta} f\right)
    }_{L^2(\R^d)}\\
    &\leq
    \norm{
    \mu
    }_{L^\infty(\R^d)}
    \norm{
    \kappa_\alpha
    }_{L^\infty(\R^d)}
    \norm{
    \fourier\left({h_\beta} f\right)
    }_{L^2(\R^d)}\\
    &=
    \norm{
    \mu
    }_{L^\infty(\R^d)}
    \norm{
    \kappa_\alpha
    }_{L^\infty(\R^d)}
    \norm{
    {h_\beta} f
    }_{L^2(\R^d)}\\
    &\leq
    \norm{
    \mu
    }_{L^\infty(\R^d)}
    \norm{
    \kappa_\alpha
    }_{L^\infty(\R^d)}
    \norm{
    {h_\beta}
    }_{L^\infty(\R^d)}
    \norm{
    f
    }_{L^2(\R^d)},
\end{align*}
and further,
\begin{align*}
    \norm{B_{\alpha,\beta}f}_{L^\infty(\R^d)}
    &\leq
    \norm{\mu
    \kappa_\alpha
    \fourier\left({h_\beta} f\right)
    }_{L^1(\R^d)}\\
    &\leq
    \norm{
    \mu
    }_{L^\infty(\R^d)}
    \norm{
    \kappa_\alpha
    \fourier\left({h_\beta} f\right)
    }_{L^1(\R^d)}\\
    &\leq
    \norm{
    \mu
    }_{L^\infty(\R^d)}
    \norm{
    \kappa_\alpha
    }_{L^2(\R^d)}
    \norm{
    \fourier\left({h_\beta} f\right)
    }_{L^2(\R^d)}\\
    &=
    \norm{
    \mu
    }_{L^\infty(\R^d)}
    \norm{
    \kappa_\alpha
    }_{L^2(\R^d)}
    \norm{
    {h_\beta} f
    }_{L^2(\R^d)}\\
    &\leq
    \norm{
    \mu
    }_{L^\infty(\R^d)}
    \norm{
    \kappa_\alpha
    }_{L^2(\R^d)}
    \norm{
    {h_\beta}
    }_{L^2(\R^d)}
    \norm{
    f
    }_{L^\infty(\R^d)}.
\end{align*}
Therefore \(B_{\alpha,\beta}\) is bounded 
both as an
operator \(L^2(\R^d)\to L^2(\R^d)\)
and
\(L^\infty(\R^d)\to L^\infty(\R^d)\).
By the Riesz-Thorin interpolation theorem
it is therefore bounded as an operator \(L^p(\R^d)\to L^p(\R^d)\) for \(p\in[2,\infty]\).
\end{proof}

One can, for example, take as filter the families
\begin{equation*}
    \kappa_\alpha = {h_\alpha} = \characteristic{\ballorigin{\alpha^{-1}}},
\end{equation*}
(where \(\ballorigin{\delta}\) is the  ball around the origin with radius \(\delta>0\)).
The bounds above become
\begin{equation*}
    \norm{B_{\alpha,\alpha}f}_{L^2(\R^d)}\leq
    \norm{
    \mu
    }_{L^\infty(\R^d)}
    \norm{
    f
    }_{L^2(\R^d)},
\end{equation*}
and
\begin{equation*}
    \norm{B_{\alpha,\alpha}f}_{L^\infty(\R^d)}\leq
    V_d\alpha^{-d}
    \norm{
    \mu
    }_{L^\infty(\R^d)}
    \norm{
    f
    }_{L^\infty(\R^d)}
\end{equation*}
where \(V_d\) is the volume of the unit ball in \(\R^d\).
We shall see that this choice is not optimal.

For certain families of filters,
the operator \(B_{\alpha,\beta}\) can be written as a composition of \(B\) with a preconditioning operator \(T_{\alpha,\beta}\).
This is highly beneficial for implementation if the multiplier \(\mu\) is not known.
Let us record some boundedness properties of this operator.

\begin{lemma}
\label{thm:preconbounded}
Let \((k_\alpha)_{\alpha>0}\)
and
\((h_\beta)_{\beta>0}\)
be two families of functions such that 
\(k_\alpha\in L^1(\R^d)\)
for all
\(\alpha>0\), and \({h_\beta}\in
L^2(\R^d)\cap L^\infty(\R^d)\) for all \(\beta>0\).
The linear operator
\begin{equation}
    \label{eq:preconbounded}
    T_{\alpha,\beta}\colon L^p(\R^d)\to L^p(\R^d)\cap L^q(\R^d)
    ,
    \quad
    f\mapsto k_\alpha * (h_\beta\cdot f),
\end{equation}
is well defined and bounded for \(1\leq p \leq \infty\),
where
\begin{equation*}
    q = \begin{cases}
        2 p/(p + 2) \in [1,2],& \text{if}\ p>2\\
        p,&\text{if}\ 1\leq p \leq 2.
    \end{cases}
\end{equation*}

\end{lemma}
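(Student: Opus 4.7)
The plan is to factor $T_{\alpha,\beta}$ as multiplication by $h_\beta$ followed by convolution with $k_\alpha$, and then control each step using Hölder's inequality and Young's convolution inequality. Since $k_\alpha \in L^1(\R^d)$, Young's inequality gives $\|k_\alpha * g\|_{L^r(\R^d)} \le \|k_\alpha\|_{L^1(\R^d)}\|g\|_{L^r(\R^d)}$ for any $r\in[1,\infty]$, so the convolution step preserves whatever exponent the multiplication step produces. All the content therefore lies in choosing how to bound $h_\beta \cdot f$.

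First I would establish the $L^p \to L^p$ bound, valid for all $1 \le p \le \infty$. Since $h_\beta \in L^\infty(\R^d)$, Hölder gives $\|h_\beta f\|_{L^p(\R^d)} \le \|h_\beta\|_{L^\infty(\R^d)}\|f\|_{L^p(\R^d)}$, and then Young's inequality yields
\begin{equation*}
\|T_{\alpha,\beta} f\|_{L^p(\R^d)} \le \|k_\alpha\|_{L^1(\R^d)}\,\|h_\beta\|_{L^\infty(\R^d)}\,\|f\|_{L^p(\R^d)}.
\end{equation*}
This already settles the case $1 \le p \le 2$, where $q=p$ and $L^p\cap L^q = L^p$.

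Next I would handle the case $p>2$, where $q = 2p/(p+2)$. The key arithmetic observation is $1/q = 1/p + 1/2$, so Hölder's inequality applied with the conjugate pair $(p,2)$ gives $\|h_\beta f\|_{L^q(\R^d)} \le \|h_\beta\|_{L^2(\R^d)}\|f\|_{L^p(\R^d)}$; one checks that $q$ ranges in $(1,2]$ as $p$ ranges in $(2,\infty]$, so this is a genuine Hölder bound. Applying Young's inequality again,
\begin{equation*}
\|T_{\alpha,\beta} f\|_{L^q(\R^d)} \le \|k_\alpha\|_{L^1(\R^d)}\,\|h_\beta\|_{L^2(\R^d)}\,\|f\|_{L^p(\R^d)}.
\end{equation*}
Combining with Step 1 gives the required $L^p \cap L^q$ boundedness.

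There is no real obstacle here: the whole statement is a direct chaining of two textbook inequalities. The only thing requiring any thought is verifying that the formula $q = 2p/(p+2)$ is precisely the exponent dictated by Hölder with one factor in $L^2$ and one in $L^p$, and checking the endpoint/range constraints $q\in[1,2]$ so that everything stays within standard Banach spaces. The case split in the definition of $q$ is exactly what is needed to avoid the ill-defined regime $q<1$ that would arise from using the formula $2p/(p+2)$ for $p<2$.
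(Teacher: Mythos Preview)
Your proposal is correct and follows essentially the same approach as the paper: both arguments use H\"older's inequality (with $h_\beta\in L^\infty$ for the $L^p\to L^p$ bound and $h_\beta\in L^2$ for the $L^p\to L^q$ bound when $p\ge 2$) followed by Young's convolution inequality with $k_\alpha\in L^1$. The only difference is cosmetic ordering and your additional remark on why the case split avoids $q<1$.
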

\begin{proof}
    By Young's convolution inequality,
    we have
    \begin{align*}
        \norm{k_\alpha * (h_\beta f)}_{L^p(\R^d)}
        &\leq
        \norm{k_\alpha}_{L^1(\R^d)}\norm{h_\beta f}_{L^p(\R^d)}
        \\&
        \leq
        \norm{k_\alpha}_{L^1(\R^d)}\norm{h_\beta}_{L^\infty(\R^d)}\norm{f}_{L^p(\R^d)}
    \end{align*}
    for \(f\in L^p(\R^d)\), which 
    shows that \(T_{\alpha,\beta}\) is bounded \(L^p(\R^d)\to L^p(\R^d)\).
    Now, for \(2\leq p \leq \infty,\)
    \(q = 2p/(p+2)\)
    we have 
    \begin{align*}
        \norm{k_\alpha * (h_\beta f)}_{L^q(\R^d)}
        &\leq
        \norm{k_\alpha}_{L^1(\R^d)}\norm{h_\beta f}_{L^q(\R^d)}
        \\&
        \leq
        \norm{k_\alpha}_{L^1(\R^d)}\norm{h_\beta}_{L^2(\R^d)}\norm{f}_{L^p(\R^d)}
    \end{align*}
    by applying Young's convolution inequality again, together with H\"older's inequality.
    This shows that \(T_{\alpha,\beta}\) is bounded \(L^p(\R^d)\to L^q(\R^d)\) as well.
\end{proof}
An interpretation of Lemma \ref{thm:preconbounded} is that 
for any \(1\leq p \leq \infty\), the operator \(T_{\alpha,\beta}\colon L^p(\R^d)\to L^p(\R^d)\) is bounded and maps into the domain of the Fourier transform.
In fact, a corollary of Lemma \ref{thm:preconbounded} is that \(T_{\alpha,\beta}\colon L^p(\R^d)\to L^2(\R^d)\) is bounded for all \(2\leq p \leq \infty\).

\begin{remark}
    If \(\kappa_\alpha = \fourier k_\alpha\) with \(k_\alpha\in L^1(\R^d)\cap L^2(\R^d)\) for all \(\alpha >0\),
    then \(\kappa_\alpha\in L^2(\R^d)\cap L^\infty(\R^d)\) and we get
    \begin{equation*}
    B_{\alpha,\beta} = BT_{\alpha,\beta}
    \end{equation*}
    by the convolution theorem:
    \begin{align*}
        B_{\alpha,\beta}f
        &=
        \fourier^{-1}\left(
        \mu\cdot 
        \kappa_\alpha
        \cdot 
        \fourier\left({h_\beta}\cdot f\right)
        \right)
        \\&
        =
        \fourier^{-1}\left(
        \mu\cdot 
        \fourier(k_\alpha)
        \cdot 
        \fourier\left({h_\beta}\cdot f\right)
        \right)
        \\&
        =
        \fourier^{-1}\left(
        \mu\cdot 
        \fourier\left(k_\alpha * ({h_\beta}\cdot f)\right)
        \right)
        \\&
        =
        BT_{\alpha,\beta}f.
    \end{align*}
    Note that Lemma \ref{thm:preconbounded}, H\"older's inequality, and the Hausdorff-Young inequality  ensure that the Fourier transforms of \(T_{\alpha,\beta}f\) and \(h_\beta\cdot f\) exist for \(f\in L^p(\R^d)\) for any \(1\leq p \leq \infty\).
    However, in this case, \(\kappa_\alpha\) is continuous by the Riemann-Lebesgue lemma,
    so filters such as
    \(\kappa_\alpha = \characteristic{\ballorigin{\alpha^{-1}}}\) are excluded.
\end{remark}

\begin{theorem}[Pointwise approximation in \(L^2(\R^d)\)]
\label{thm:regfouriermult_pwl2approx}
    Let \(k,\fhat{h}\in L^1(\R^d)\cap L^2(\R^d)\) be two functions satisfying \(\int k = \int \fhat{h} = 1\).
    Define the families \((k_\alpha)_{\alpha>0}\) and \((h_\beta)_{\beta>0}\)
    through dilation,
    \begin{align}
    \label{eq:approxiddilate_k}
        k_\alpha(x) &= \alpha^{-n} k(x/\alpha)\\
    \label{eq:approxiddilate_h}
        \fhat{h}_\beta(x) &= \beta^{-n} \fhat{h}(x/\beta),\qquad h_\beta = \mathcal{F}^{-1}(\fhat{h}_\beta).
    \end{align}
    Then, for every \(f\in L^2(\R^d)\) we have
    \begin{equation*}
        \norm{T_{\alpha,\beta}f - f}_{L^2(\R^d)}\xrightarrow[\alpha,\beta\to 0]{} 0
    \end{equation*}
    and
    \begin{equation*}
        \norm{B_{\alpha,\beta}f - Bf}_{L^2(\R^d)}\xrightarrow[\alpha,\beta\to 0]{} 0.
    \end{equation*}
\end{theorem}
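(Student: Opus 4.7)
The plan is to reduce both convergences to a single approximate-identity argument. The preceding remark applies because $k\in L^1(\R^d)\cap L^2(\R^d)$ guarantees that $\kappa_\alpha:=\fourier k_\alpha$ lies in $L^2(\R^d)\cap L^\infty(\R^d)$, so that $B_{\alpha,\beta}=BT_{\alpha,\beta}$. Since $B$ is bounded on $L^2(\R^d)$ with norm at most $\norm{\mu}_{L^\infty(\R^d)}$, we get
\[
\norm{B_{\alpha,\beta}f-Bf}_{L^2(\R^d)}\le \norm{\mu}_{L^\infty(\R^d)}\norm{T_{\alpha,\beta}f-f}_{L^2(\R^d)},
\]
and the second convergence reduces to the first.

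To prove $T_{\alpha,\beta}f\to f$ in $L^2(\R^d)$, I split via the triangle inequality:
\[
\norm{k_\alpha*(h_\beta f)-f}_{L^2(\R^d)}\le \norm{k_\alpha*(h_\beta f-f)}_{L^2(\R^d)}+\norm{k_\alpha*f-f}_{L^2(\R^d)}.
\]
Young's convolution inequality bounds the first summand by $\norm{k}_{L^1(\R^d)}\norm{h_\beta f-f}_{L^2(\R^d)}$, while the second summand tends to zero as $\alpha\to 0$ by the classical fact that a dilation family generated by an integrable function of total mass one is an approximate identity on $L^2(\R^d)$.

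It remains to show $h_\beta f\to f$ in $L^2(\R^d)$ as $\beta\to 0$. Applying the dilation rule for the Fourier transform to $\fhat{h}_\beta(\xi)=\beta^{-d}\fhat{h}(\xi/\beta)$ gives $h_\beta(x)=h(\beta x)$. Since $\fhat{h}\in L^1(\R^d)$, the function $h$ is continuous and bounded by $\norm{\fhat{h}}_{L^1(\R^d)}$, and its value at the origin is $h(0)=\int\fhat{h}=1$. Hence $h_\beta(x)\to 1$ pointwise as $\beta\to 0$, while $\abs{h_\beta f-f}^2\le (1+\norm{\fhat{h}}_{L^1(\R^d)})^2\abs{f}^2\in L^1(\R^d)$, so dominated convergence yields the $L^2$ convergence.

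The main subtlety I anticipate is the identification $h_\beta(x)=h(\beta x)$ and the concomitant verification that $h(0)=1$ under the paper's normalization of $\fourier$; once this is in hand, the remaining ingredients are routine approximate-identity machinery, Young's convolution inequality, and dominated convergence.
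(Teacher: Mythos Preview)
Your proof is correct and follows essentially the same structure as the paper's: reduce $B_{\alpha,\beta}\to B$ to $T_{\alpha,\beta}\to\identity$ via $B_{\alpha,\beta}=BT_{\alpha,\beta}$, split with the triangle inequality, and control the first piece by Young's inequality. The only difference is in showing $\norm{h_\beta f-f}_{L^2}\to 0$: the paper applies Plancherel to rewrite this as $\norm{\fhat{h}_\beta*\fhat{f}-\fhat{f}}_{L^2}$ and invokes the approximate-identity property of $(\fhat{h}_\beta)_{\beta>0}$, whereas you work directly in physical space via $h_\beta(x)=h(\beta x)\to h(0)=1$ and dominated convergence---both arguments are equally valid and of comparable length.
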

\begin{proof}
    Note that \(h_\beta = \fourier^{-1}[\fhat{h}_\beta]\in L^2(\R^d)\cap L^\infty(\R^d)\).
    We have established that \(B_{\alpha,\beta} = BT_{\alpha,\beta}\)
    and that \(\norm{B}_{L^2(\R^d)\to L^2(\R^d)}\leq \norm{\mu}_{L^\infty(\R^d)}\). 
    Therefore   
    \begin{equation*}
        \norm{B_{\alpha,\beta}f - Bf}_{L^2(\R^d)}
        \leq
        \norm{\mu}_{L^\infty(\R^d)}
        \norm{T_{\alpha,\beta}f - f}_{L^2(\R^d)}.
    \end{equation*}
    Using the triangle inequality, 
    Young's convolution inequality,
    and the fact that \(\norm{k_\alpha}_{L^1(\R^d)} = \norm{k}_{L^1(\R^d)}\),
    we get
    \begin{align*}
        \norm{T_{\alpha,\beta}f - f}_{L^2(\R^d)}
        & =
        \norm{ k_\alpha * (h_\beta f) - f }_{L^2(\R^d)}
        \\
        &\leq 
        \norm{k_\alpha * (h_\beta f) - k_\alpha * f}_{L^2(\R^d)}
        +
        \norm{k_\alpha*f - f}_{L^2(\R^d)}\\
        &=
        \norm{k_\alpha * (h_\beta f- f)}_{L^2(\R^d)}
        +
        \norm{k_\alpha*f - f}_{L^2(\R^d)}
        \\
        &\leq
        \norm{k_\alpha}_{L^1(\R^d)}
        \norm{h_\beta f - f}_{L^2(\R^d)}
        +
        \norm{k_\alpha * f - f}_{L^2(\R^d)}
        \\
        &=
        \norm{k}_{L^1(\R^d)}
        \norm{\fourier(h_\beta f - f)}_{L^2(\R^d)}
        +
        \norm{k_\alpha * f - f}_{L^2(\R^d)}
        \\
        &=
        \norm{k}_{L^1(\R^d)}
        \norm{\fhat{h}_\beta * \fhat{f} - \fhat{f}}_{L^2(\R^d)}
        +
        \norm{k_\alpha * f - f}_{L^2(\R^d)}.
    \end{align*}
    Since \((k_\alpha)_{\alpha>0}\) and \((\fhat{h}_\beta)_{\beta>0}\) form an approximation to the identity,
    we have
    \begin{align*}
        \norm{k_\alpha * f - f}_{L^2(\R^d)}
        &\xrightarrow[\alpha\to 0]{}0\quad\text{and}\\
        \norm{\fhat{h}_\beta * \fhat{f} - \fhat{f}}_{L^2(\R^d)}
        &\xrightarrow[\beta\to 0]{}0,
    \end{align*}
    which concludes the proof.
\end{proof}

\begin{remark}
    The choice of filters (\ref{eq:approxiddilate_k}-\ref{eq:approxiddilate_h})
    is equivalent to
    \begin{align*}
        \kappa_\alpha(\xi) &= \kappa(\alpha \xi)
        \\
        h_\beta(x) &= h(-\beta x),
    \end{align*}
    with \(h = \fourier^{-1}\fhat{h}\) and \(\kappa = \fourier k\).
    Note that an integrable \(\fhat{h}\) yields a continuous \(h\), so we have now excluded both \(\kappa_\alpha=\characteristic{\ballorigin{\alpha^{-1}}}\) and \(h_\beta=\characteristic{\ballorigin{\beta^{-1}}}\).
    We record the norms of the filters as follows:
\begin{align*}
    \smallnorm{\kappa_\alpha}_{L^2(\R^d)} &=
    \smallnorm{k_\alpha}_{L^2(\R^d)}
    = \alpha^{-n/2} \smallnorm{k}_{L^2(\R^d)},
    \\
    \smallnorm{\kappa_\alpha}_{L^\infty(\R^d)} &\leq
    \smallnorm{k_\alpha}_{L^1(\R^d)}
    = \smallnorm{k}_{L^1(\R^d)},
    \\
    \smallnorm{h_\beta}_{L^2(\R^d)}
    &= \smallnorm{\fhat{h}_\beta}_{L^2(\R^d)}
    = \beta^{-n/2} \smallnorm{h}_{L^2(\R^d)},
    \\
    \smallnorm{h_\beta}_{L^\infty(\R^d)}
    &\leq \smallnorm{\fhat{h}_\beta}_{L^1(\R^d)}
    = \smallnorm{h}_{L^1(\R^d)}.
\end{align*}
\end{remark}

We now turn to uniform approximation of \(Bf\) by \(B_{\alpha,\beta}f\).
This is not meaningful unless \(Bf\in L^\infty(\R^d)\), 
and therefore we may only consider \(f\) coming from some subspace of \(L^2(\R^d)\) with this property.
We let it suffice to study functions coming from the space
\begin{equation*}
    \fourier^{-1}[L^1(\R^d)]\cap L^2(\R^d)
    \subseteq B^{-1}[L^\infty(\R^d)],
\end{equation*}
on which \(B_{\alpha,\beta}\) does indeed approximate \(B\).

\begin{theorem}[Pointwise approximation in \(L^\infty(\R^d)\)]
\label{thm:regfouriermult_pwuniformapprox}
    Define the families \((k_\alpha)_{\alpha>0}\) and \((h_\beta)_{\beta>0}\) as in Theorem \ref{thm:regfouriermult_pwl2approx}.
    Then, for any \(f\in L^2(\R^d)\) with \(\fhat{f}\in L^1(\R^d)\) we have
    \begin{equation*}
        \norm{B_{\alpha,\beta}f - Bf}_{L^\infty(\R^d)}\xrightarrow[\alpha,\beta\to 0]{} 0.
    \end{equation*}
\end{theorem}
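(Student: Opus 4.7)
The plan is to transfer the analysis to the Fourier side, where the trivial bound $\norm{\fourier^{-1}g}_{L^\infty(\R^d)} \le \norm{g}_{L^1(\R^d)}$ converts the claim into an $L^1$ approximation statement. Since $\fhat{f} \in L^1(\R^d)$ and $\mu \in L^\infty(\R^d)$, we have $Bf = \fourier^{-1}(\mu \fhat{f})$ with $\mu\fhat{f} \in L^1(\R^d)$. The convolution identity $\fourier(h_\beta f) = \fhat{h}_\beta \ast \fhat{f}$ (valid in $L^1$ since $\fhat{h}_\beta, \fhat{f} \in L^1(\R^d)$) together with $\mu,\kappa_\alpha \in L^\infty(\R^d)$ places $\mu \kappa_\alpha \fourier(h_\beta f)$ in $L^1(\R^d)$ as well. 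The trivial bound therefore reduces the theorem to showing
\[
\norm{\kappa_\alpha \fourier(h_\beta f) - \fhat{f}}_{L^1(\R^d)} \xrightarrow[\alpha,\beta\to 0]{} 0.
\]

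A triangle inequality splits this into
\[
\norm{\kappa_\alpha\bigl(\fourier(h_\beta f) - \fhat{f}\bigr)}_{L^1(\R^d)} + \norm{(\kappa_\alpha - 1)\fhat{f}}_{L^1(\R^d)},
\]
and each piece yields to an approximate-identity argument. For the first piece, I would use $\norm{\kappa_\alpha}_{L^\infty(\R^d)} \le \norm{k_\alpha}_{L^1(\R^d)} = \norm{k}_{L^1(\R^d)}$ uniformly in $\alpha$ (trivial Hausdorff-Young), reducing matters to $\norm{\fhat{h}_\beta \ast \fhat{f} - \fhat{f}}_{L^1(\R^d)} \to 0$, which is the standard $L^1$ approximate-identity statement applied to the dilation of the integrable function $\fhat{h}$ of integral $1$. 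For the second piece, since $k \in L^1(\R^d)$, the function $\kappa = \fourier k$ is continuous and bounded with $\kappa(0) = \int k = 1$, so $\kappa_\alpha(\xi) = \kappa(\alpha \xi) \to 1$ pointwise as $\alpha \to 0$; dominating $\abs{(\kappa_\alpha-1)\fhat{f}}$ by $(1+\norm{k}_{L^1(\R^d)})\abs{\fhat{f}} \in L^1(\R^d)$, dominated convergence closes the argument.

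The main (minor) obstacle is the rigorous justification of the convolution identity $\fourier(h_\beta f) = \fhat{h}_\beta \ast \fhat{f}$: neither $h_\beta$ nor $f$ need lie in $L^1(\R^d)$ individually, though $h_\beta f$ does by Cauchy-Schwarz on the $L^2 \cdot L^2$ product. I would settle this by approximating $f$ by Schwartz functions $f_n$ with $f_n\to f$ in $L^2(\R^d)$ and $\fhat{f}_n \to \fhat{f}$ in $L^1(\R^d)$, applying the classical Schwartz identity, and passing to the limit using $h_\beta f_n \to h_\beta f$ in $L^2(\R^d)$ (so $\fourier(h_\beta f_n)\to \fourier(h_\beta f)$ in $L^2$) together with $\fhat{h}_\beta \ast \fhat{f}_n \to \fhat{h}_\beta \ast \fhat{f}$ in $L^1(\R^d)$ by Young's inequality. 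Beyond this bookkeeping the argument is routine.
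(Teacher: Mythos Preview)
Your proposal is correct and follows essentially the same route as the paper's proof: both transfer to the Fourier side via $\norm{\fourier^{-1}g}_{L^\infty}\le\norm{g}_{L^1}$, split by adding and subtracting $\kappa_\alpha\fhat{f}$, and handle the two pieces by the $L^1$ approximate-identity property of $(\fhat{h}_\beta)$ and by dominated convergence using $\kappa_\alpha(\xi)=\kappa(\alpha\xi)\to\kappa(0)=1$. Your additional care in justifying the identity $\fourier(h_\beta f)=\fhat{h}_\beta\ast\fhat{f}$ (by approximating $\fhat{f}$ in $L^1\cap L^2$ by Schwartz functions and invoking Plancherel) goes slightly beyond what the paper spells out, but is sound.
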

\begin{proof}
    First note that \(\kappa = \fhat{k}\) is a continuous function by the Riemann-Lebesgue lemma, and that
    \begin{equation*}
        \kappa_\alpha(\xi)
        = \kappa (\alpha\xi)
        \xrightarrow[\alpha\to 0]{}
        \kappa(0)
        = \int k(x)\di x = 1,
    \end{equation*}
    and hence
    \begin{equation*}
        \kappa_\alpha(\xi)\fhat{f}(\xi)
        \xrightarrow[\alpha\to 0]{} \fhat{f}(\xi)
    \end{equation*}
    for almost every \(\xi\in\R^d\).
    Moreover, we have
    \begin{equation*}
        \abs{\kappa_\alpha(\xi)\fhat{f}(\xi)}
        \leq
        \norm{\kappa_\alpha}_{L^\infty(\R^d)}
        \abs{\fhat{f}(\xi)}
        =
        \norm{\kappa}_{L^\infty(\R^d)}
        \abs{\fhat{f}(\xi)}
    \end{equation*}
    for all \(\xi\),
    and since \(\fhat{f}\) is integrable,
    we have
    \begin{equation*}
        \smallnorm{\kappa_\alpha \fhat{f} - \fhat{f}}_{L^1(\R^d)}
        \xrightarrow[\alpha\to 0]{}0
    \end{equation*}
    by the dominated convergence theorem.
    
    Assuming \(\smallnorm{\mu}_{L^\infty(\R^d)}=1\) for simplicity,
    we get:
    \begin{align*}
        \smallnorm{B_{\alpha,\beta}f - Bf}_{L^\infty(\R^d)}
        &=
        \smallnorm{
        \fourier^{-1}
        \left[
        \mu \kappa_\alpha \fourier (h_\beta f)
        -
        \mu \fourier f
        \right]
        }_{L^\infty(\R^d)}\\
        &\leq
        \smallnorm{\mu}_{L^\infty(\R^d)}
        \smallnorm{
        \kappa_\alpha \fourier (h_\beta f)
        -
        \fhat{f}  
        }_{L^1(\R^d)}\\
        &=
        \smallnorm{
        \kappa_\alpha \left( \fhat{h}_\beta *\fhat{f}\right)
        -
        \fhat{f}
        }_{L^1(\R^d)}\\
        &\leq
        \smallnorm{
        \kappa_\alpha \left( \fhat{h}_\beta *\fhat{f}\right)
        -
        \kappa_\alpha \fhat{f}
        }_{L^1(\R^d)}
        +
        \smallnorm{
        \kappa_\alpha \fhat{f}
        -
        \fhat{f}
        }_{L^1(\R^d)}\\
        &\leq
        \smallnorm{\kappa_\alpha}_{L^\infty(\R^d)}
        \smallnorm{
        \fhat{h}_\beta *\fhat{f}
        -
        \fhat{f}
        }_{L^1(\R^d)}
        +
        \smallnorm{
        \kappa_\alpha \fhat{f}
        -
        \fhat{f}
        }_{L^1(\R^d)}\\
        &\leq
        \smallnorm{\kappa}_{L^\infty(\R^d)}
        \smallnorm{
        \fhat{h}_\beta *\fhat{f}
        -
        \fhat{f}
        }_{L^1(\R^d)}
        +
        \smallnorm{
        \kappa_\alpha \fhat{f}
        -
        \fhat{f}
        }_{L^1(\R^d)}
        \xrightarrow[\alpha,\beta\to 0]{}0.
    \end{align*}
    Here we have used the fact that \((\fhat{h}_\beta)_{\beta>0}\) is an approximation to the identity.
    This concludes the proof.
\end{proof}

\subsection{The wave equation: a numerical example}
Having proved Theorems \ref{thm:regfouriermult_pwl2approx} and \ref{thm:regfouriermult_pwuniformapprox},
we can confidently employ \(B_{\alpha,\beta}\) to tackle the adversarial sequences for wave propagation defined in Section \ref{sec:weqzeroinit}.
We make use of the decomposition \(B_{\alpha,\beta}=BT_{\alpha,\beta}\),
and choose simple radial filters,
\begin{equation*}
    h(x) = k(x) = \max\left\{1-\enorm{x},0\right\},
\end{equation*}
which are shown in Fig.~\ref{fig:weq-kernels} along with their dilations.
Figure \ref{fig:weq-regularized} shows the evolution of adversarially perturbed radial waves, as well as approximations by \(T_{\alpha,\beta}\) and \(B_{\alpha,\beta}\).

\begin{figure}
    \centering
    \includegraphics[width=\textwidth]{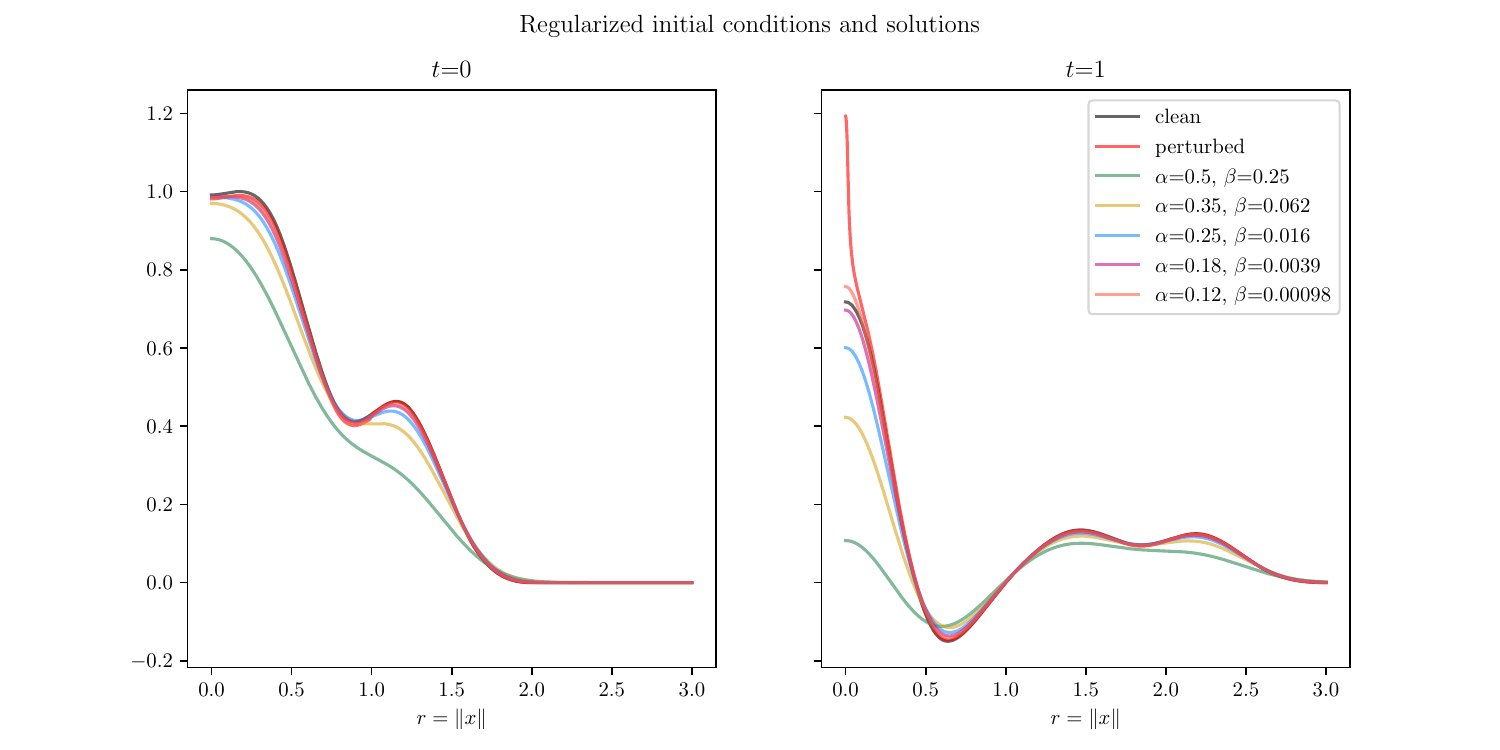}
    \caption{
    Radial component of spherical waves at times \(t=0\) and \(t=1\).
    \textbf{Left:} the clean initial state \(f\), a perturbed initial state \(f+r\), and regularized initial states \(T_{\alpha,\beta}(f+r)\).
    The perturbation satisfies \(\norm{r}_{L^\infty(\R^3)}=0.01\cdot\norm{f}_{L^\infty(\R^3)}\). The initial state and the perturbation are the same as in Fig.~\ref{fig:weq-signalpert}.
    \textbf{Right:} the end states \(Bf\), \(B(f+r)\), and \(B_{\alpha,\beta}(f+r)=BT_{\alpha,\beta}(f+r)\).}
    \label{fig:weq-regularized}
\end{figure}

\begin{figure}
    \centering
    \includegraphics[width=\textwidth]{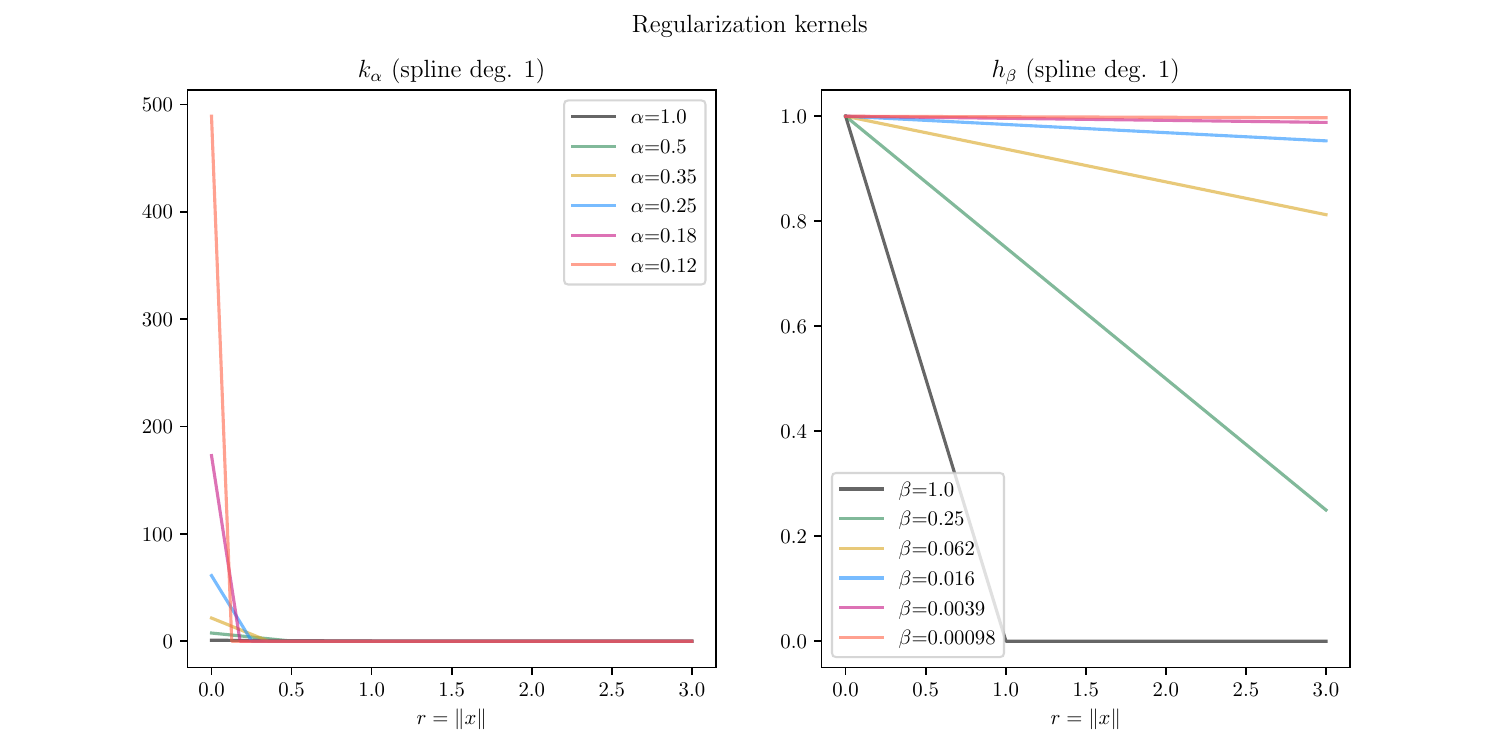}
    \caption{The regularization filters, \(k_\alpha\) and \(h_\beta\), used to define the preprocessing operator \(T_{\alpha,\beta}\colon f \mapsto k_\alpha*(h_\beta f)\) used in Fig.~\ref{fig:weq-regularized}.}
    \label{fig:weq-kernels}
\end{figure}

\section{Uniform approximation with spectral filtering and applications to inverse problems}\label{sec:IP}

In this section, \(A\colon X\to Y\) will denote a compact  linear operator between two separable Hilbert spaces with an infinite dimensional range.
The space \(Y\) is otherwise arbitrary, but we will identify \(X\) with \(L^2(\mathcal{M})\) for some measure space \(\mathcal{M}\).
We are interested in finding a regularization of the pseudoinverse \(\pseudoinv{A}\colon Y\to X\) that is also well behaved as an operator \(Y\to X'\) with the auxiliary space being \(X'=L^\infty(\mathcal{M})\).

As a compact operator, \(A\) has a singular value decomposition
\begin{equation*}
    Ax = \sum_{n=1}^\infty \sigma_n\langle x, v_n\rangle_X u_n,\quad x\in X,
\end{equation*}
where \((u_n)_{n\in\N}\) and \((v_n)_{n\in\N}\) are orthonormal systems in \(Y\) and \(X\), respectively, and \((\sigma_n)_{n\in\N}\) is a non-increasing sequence of positive real numbers with $\sigma_n\to 0$ as $n\to+\infty$.
Likewise, the pseudoinverse \(\pseudoinv{A}\) can be expressed by
\begin{equation*}
    \pseudoinv{A}y = \sum_{n=1}^\infty \frac{\langle y, u_n\rangle_Y}{\sigma_n} v_n,
\end{equation*}
for
\begin{equation*}
y\in\domain{\pseudoinv{A}}
=
\left\{
y\in Y\colon
\sum_{n=1}^\infty \sigma_n^{-2}\abs{\langle y,u_n\rangle_Y}^2 < \infty
\right\}\subsetneq Y.
\end{equation*}
We make use of this fact heavily in what follows.

\subsection{Instability in Tikhonov regularization}
Consider the Tikhonov regularizer \cite{engl2000regularization}
\begin{equation*}
    T^{\mathrm{Tikh}}_{\alpha}\colon Y\to X, \quad \alpha>0,
\end{equation*}
which can be expressed in terms of the SVD of \(A\) as
\begin{equation*}
    T^{\mathrm{Tikh}}_{\alpha}y = 
    \sum_{n=1}^\infty \frac{\sigma_n}{\sigma_n^2+\alpha}\langle y, u_n\rangle_Y v_n,\quad y\in Y.
\end{equation*}
This is a bounded operator as a mapping \(Y\to X\), with \(X=L^2(\mathcal{M})\),
but it is not, in general, bounded as a map \(Y\to X'\) for \(X'=L^\infty(\mathcal{M})\).

In the simplest case, the left singular vectors \((u_n)_{n\in\N}\) form an adversarial sequence for \(T^{\mathrm{Tikh}}_{\alpha}\) (cf. Definition \ref{def:weq:aps}).
While they are bounded in \(Y\) (with norms equal to 1), we have
\begin{align*}
    \smallnorm{T^{\mathrm{Tikh}}_{\alpha} u_n}_{L^\infty(\mathcal{M})}
    &=
    \frac{\sigma_n}{\sigma_n^2+\alpha}\smallnorm{v_n}_{L^\infty(\mathcal{M})}
    \\
    &\geq 
    \frac{1}{\sigma_1^2+\alpha}
    \sigma_n\smallnorm{v_n}_{L^\infty(\mathcal{M})},
    \quad\text{and}\\
    \smallnorm{T^{\mathrm{Tikh}}_{\alpha} u_n}_{L^\infty(\mathcal{M})}
    &\leq
    \frac{1}{\alpha}
    \sigma_n\smallnorm{v_n}_{L^\infty(\mathcal{M})}.
\end{align*}
Therefore, for every $\alpha>0$, the sequence $(\smallnorm{T^{\mathrm{Tikh}}_{\alpha} u_n}_{L^\infty(\mathcal{M})})_n$ is bounded if and only if the sequence \((\sigma_n\smallnorm{v_n}_{L^\infty(\mathcal{M})})_n\) is bounded.
In particular, if \((\sigma_n\smallnorm{v_n}_{L^\infty(\mathcal{M})})_n\notin\ell^\infty\), then  \(T^{\mathrm{Tikh}}_{\alpha}\) is not bounded from $Y$ into \(L^\infty(\mathcal{M})\).
Further, if one of the right singular vectors \((v_n)_{n\in\N}\) is itself an unbounded function, then \(T^{\mathrm{Tikh}}_{\alpha}\) is not bounded from $Y$ into $X'$.

This is only a necessary condition: there are situations where the sequence \((\sigma_n\smallnorm{v_n}_{L^\infty(\mathcal{M})})\) is bounded, but the Tikhonov regularizer is not bounded as an operator into \(L^\infty(\mathcal{M})\).
To see this, we consider the following example.
\begin{example}[Periodic convolution]
\label{ex:singularconvolution}
Let \(X=Y=L^2(\T)\), where \(\T\) is the unit circle (identified with \([0,1]\)), and
consider the periodic convolution operator \(A\colon L^2(\T)\to L^2(\T),\)
defined for \(x\in L^2(\T)\) by
\begin{equation*}
    \left(Ax\right)(t) = \left(k*x\right)(t)
    =
    \int_0^1 k(t-s)x(s)\di s,\quad t\in\T,
\end{equation*}
with some kernel \(k\in L^1(\T)\).
This operator is diagonal in the Fourier basis
\(
(v_n)_{n\in\Z}=
(\expon{2\pi\imag n\cdot})_{n\in\Z}\)
and its singular values are the absolute values of the non-zero Fourier coefficients of \(k\).
Assuming, for ease of notation, that all Fourier coefficients of \(k\) are non-zero, we thus obtain an SVD of \(A\) with 
\((v_n)_{n\in\Z}\) as right singular vectors, and a phase shifted version \((u_n)_{n\in\Z}\) of the same basis as left singular vectors.

Suppose that the kernel is badly singular, so that \(k\in L^1(\T)\setminus L^2(\T)\),
and that the (rearranged) singular values satisfy
\begin{equation*}
    \sigma_n=\abs{\fhat{k}[n]}
    \geq \frac{1}{\left(\abs{n}+1\right)^\rho},
    \quad\text{for all}\quad n\in\Z
\end{equation*}
with some \(0<\rho <1/2\).
Define perturbations \((r_N)_{N\in\N}\) by
\begin{equation*}
    r_N = \sum_{n=-N}^N a_n u_n\in Y,
\qquad
    a_n = \frac{1}{\left(\abs{n}+1\right)^{1-\rho}}.
\end{equation*}
Since \(1-\rho>1/2\), we have \((a_n)_{n\in\Z}\in\ell^2(\Z)\), and hence the perturbations are bounded by
\(\smallnorm{r_N}_Y^2 \leq \sum_{n\in \Z} a_n^2 < \infty\).

Adversarial artifacts now appear at \(0\):
\begin{align*}
    \left(T^{\mathrm{Tikh}}_{\alpha} r_N \right)(0)
    &= \sum_{n=-N}^N \frac{\sigma_n }{\sigma_n^2+\alpha}a_n v_n(0)\\
    &\geq
    \frac{1}{\sigma_\mathrm{max}^2+\alpha}
    \sum_{n=-N}^N
    a_n\sigma_n v_n(0)\\
    &\geq
    \frac{1}{\sigma_\mathrm{max}^2+\alpha}
    \sum_{n=-N}^N
    \frac{1}{\left(\abs{n}+1\right)^{1-\rho}}\frac{1}{\left(\abs{n}+1\right)^{\rho}}\cdot 1\\
    &=
    \frac{1}{\sigma_\mathrm{max}^2+\alpha}
    \sum_{n=-N}^N
    \frac{1}{\abs{n}+1}
    \xrightarrow[N\to\infty]{}\infty,
\end{align*}
where \(\sigma_\mathrm{max}\geq \sigma_n\) for all \(n\in\Z\).
The perturbations \((r_N)_{N\in\N}\) form an adversarial sequence, and \(r = \lim_{N\to\infty} r_N\in Y\) is an adversarial perturbation, namely, $T^{\mathrm{Tikh}}_{\alpha}(Y)\not\subseteq L^\infty(\mathcal{M})$.
Hence,
\(T^{\mathrm{Tikh}}_{\alpha}\) is not bounded \(Y\to L^\infty(\mathcal{M})\).
\end{example}

\subsection{Spectral filtering}
In order to ensure a bounded reconstruction into \(X'=L^\infty(\mathcal{M})\),
we modify the Tikhonov regularizer by placing non-uniform weights in the SVD.
That way, we can regularize more strongly in those directions that correspond to problematic right singular vectors.

Let \(c=(c_n)_{n\in\N}\) be a sequence of real numbers
that is bounded below by some positive number \(c_0\),
\begin{equation*}
    0 < c_0 \leq c_n\quad\text{for all}\ n\in\N.
\end{equation*}
Denote by \(T^c_\alpha\) the \emph{spectral filtering} operator
\begin{equation}
\label{eq:spectralfilterdef}
    T^c_\alpha y = \sum_{n=1}^\infty
    \frac{\sigma_n}{\sigma_n^2 + \alpha c_n}
    \langle y,u_n \rangle_Y
    v_n,\quad y\in Y.
\end{equation}
The threshold \(c_0\) ensures that this operator is dominated by a Tikhonov regularizer,
\begin{equation*}
    \smallnorm{T^c_\alpha y}_X^2
    =
    \sum_{n=1}^\infty
    \frac{\sigma_n^2\abs{\langle y,u_n \rangle_Y}^2}{\left(\sigma_n^2 + \alpha c_n\right)^2}
    \leq
    \sum_{n=1}^\infty
    \frac{\sigma_n^2\abs{\langle y,u_n \rangle_Y}^2}{\left(\sigma_n^2 + \alpha c_0\right)^2}
    =
    \smallnorm{T^{\mathrm{Tikh}}_{\alpha c_0} y}_X^2
\end{equation*}
for all \(y\in Y\),
and is thus bounded with
\begin{equation}
\label{eq:spectralbounded}
    \smallnorm{T^c_\alpha}_{Y\to X}
    \leq
    \frac{1}{\sqrt{\alpha c_0}}
\end{equation}
by Lemma \ref{thm:tikhonovbounded}.
The fact that \(\smallnorm{T^c_\alpha}_{Y\to X}\leq \smallnorm{T^\mathrm{Tikh}_{\alpha c_0}}_{Y\to X}\) will later be useful for obtaining a convergence rate for \(T^c_\alpha y\xrightarrow[\alpha\to 0]{}\pseudoinv{A}y\).

We remark that an alternative formulation of this operator is
\begin{equation*}
    T^c_\alpha y =
    \argmin_{x\in \domain{W}}
    \norm{Ax-y}_Y^2
    +
    \alpha
    \norm{Wx}_X^2,
\end{equation*}
where \(W\colon \domain{W}\to X\) is the linear operator
\begin{equation*}
    Wx = \sum_{n=1}^\infty \sqrt{c_n} \langle x, v_n\rangle_X v_n,
\end{equation*}
defined on a subspace \(\domain{W}\subseteq X\).
With \(c_n=1\) for all \(n\in\N\), we recover the familiar formulation of Tikhonov regularization.

Next, we show that spectral filtering is indeed a regularization method in \(X=L^2(\mathcal{M})\), 
and that, under mild conditions, it converges in \(X'=L^\infty(\mathcal{M})\) as well. Note that in order for it to be a regularization as an operator from $Y$ to $X'$, we also need to establish that $T_\alpha^c:Y \to X'$ is bounded. This is the subject of Section \ref{Sec:boundedness}.

\subsubsection{Convergence}
We now prove two  approximation results: pointwise in $X$ for all $y \in \mathcal{D}(A^\dagger)$ and pointwise in $X'$ under more restrictive assumptions. 
These theorems do not provide a convergence rate, as that will require more restrictions on \(y\).

\begin{theorem}[Pointwise approximation in \(X\)]
\label{thm:spectralfilter_pwl2approx}
    We have
    \begin{equation*}
        \smallnorm{\pseudoinv{A} y - T^c_\alpha y}_X
        \xrightarrow[\alpha\to 0]{}0
    \end{equation*}
    for all \(y\in \domain{\pseudoinv{A}}\).
\end{theorem}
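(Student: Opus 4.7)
The plan is to expand both $\pseudoinv{A}y$ and $T^c_\alpha y$ in the right-singular basis $(v_n)_{n\in\N}$ and use Parseval. The key algebraic manipulation I would make is to factor the difference of filter coefficients as
\[
\frac{1}{\sigma_n} - \frac{\sigma_n}{\sigma_n^2 + \alpha c_n}
= \frac{1}{\sigma_n}\cdot\frac{\alpha c_n}{\sigma_n^2 + \alpha c_n},
\]
which by orthonormality of $(v_n)_{n\in\N}$ yields
\[
\smallnorm{\pseudoinv{A}y - T^c_\alpha y}_X^2
= \sum_{n=1}^\infty \frac{\abs{\langle y, u_n\rangle_Y}^2}{\sigma_n^2}
\left(\frac{\alpha c_n}{\sigma_n^2 + \alpha c_n}\right)^2.
\]

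Next, I would record two properties of the residual filter factor $r_n(\alpha) := \alpha c_n/(\sigma_n^2+\alpha c_n)$: it satisfies $0\leq r_n(\alpha)\leq 1$ uniformly in $n$ and $\alpha$, and for each fixed $n$ one has $r_n(\alpha)\to 0$ as $\alpha\to 0$. This uniform bound is the conceptual crux: since the sequence $(c_n)$ is only required to be bounded \emph{below} by $c_0>0$ and may well be unbounded above, neither $\alpha c_n$ nor $\sigma_n^2+\alpha c_n$ can be controlled individually, and without this cancellation the argument would fail. Identifying that the $c_n$ appearing in the numerator and denominator cancel into a universally bounded factor is what I would expect to be the only subtle step.

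Finally, I would conclude by dominated convergence applied to the series. The assumption $y\in\domain{\pseudoinv{A}}$ is precisely $\sum_n \abs{\langle y, u_n\rangle_Y}^2/\sigma_n^2 <\infty$, providing a summable majorant; combined with the pointwise-in-$n$ convergence $r_n(\alpha)^2\to 0$ as $\alpha\to 0$, this forces the sum to tend to zero. If one prefers to avoid invoking dominated convergence for series, the same conclusion follows by splitting the sum at an index $N$, bounding the tail uniformly in $\alpha$ by the majorant (which is small for large $N$), and observing that the finite head is a finite sum of terms each tending to $0$ as $\alpha\to 0$.
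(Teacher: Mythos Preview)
Your proof is correct and essentially identical to the paper's: the paper also computes $\smallnorm{\pseudoinv{A}y - T^c_\alpha y}_X^2$ in the singular basis, uses the bound $\alpha c_n/(\sigma_n^2+\alpha c_n)\leq 1$ on the tail together with Picard's criterion, and handles the finite head explicitly---precisely the head/tail split you describe as the alternative to dominated convergence.
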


\begin{proof}
Let \(\varepsilon>0\).
We will show that the error is smaller than \(\varepsilon\) for all \(\alpha\) smaller than some \(\alpha_0>0\).

Since \(y\in\domain{\pseudoinv{A}}\) and by Picard's criterion,
we can select an integer \(N\) large enough so that
\begin{equation*}
    \sum_{n=N+1}^\infty \frac{\abs{\langle y,u_n\rangle_Y}^2}{\sigma_n^2} <  \frac{\varepsilon^2}{4}
\end{equation*}
and the error can be bounded as follows:
\begin{align*}
    \smallnorm{\pseudoinv{A} y - T^c_\alpha y}_X
    &=
    \norm{
    \sum_{n=1}^\infty \left( \frac{1}{\sigma_n} - \frac{\sigma_n}{\sigma_n^2+\alpha c_n} \right) \langle y, u_n\rangle_Y v_n
    }_X
       \\&\leq
    \sqrt{
    \sum_{n=1}^N 
    \frac{\alpha^2c_n^2 \abs{\langle y, u_n\rangle_Y}^2}{\sigma_n^2\left(\sigma_n^2+\alpha c_n\right)^2}
    }
    +
    \sqrt{\sum_{n=N+1}^\infty \frac{\alpha^2 c_n^2 \abs{\langle y,u_n\rangle_Y}^2}{\sigma_n^2 \left(\sigma_n^2+\alpha c_n\right)^2}}
    \\&<
    \alpha
    \sqrt{
    \sum_{n=1}^N 
    \frac{c_n^2 \abs{\langle y, u_n\rangle_Y}^2}{\sigma_n^6}
    }    
    +\frac{\varepsilon}{2}=:
    \alpha
    C_N
    +\frac{\varepsilon}{2}.
\end{align*}
The result now follows by choosing \(\alpha_0\) small enough so that \(\alpha_0 C_N < \varepsilon/2\). 
\end{proof}

Note that $C_N$, and hence $\alpha_0,$ depends on $y$, $c_1,\dots,c_N$ and the SVD of $A$. As previously stated, we are interested in finding solutions to \(Ax=y\) that lie in \(X'=L^\infty(\mathcal{M})\).
Therefore, our regularization method should approximate \(\pseudoinv{A}y\) uniformly whenever \(\norm{\pseudoinv{A}y}_{L^\infty(\mathcal{M})}<\infty\).
Write
\begin{equation}
\label{eq:uniformsourceset}
    U
    =
    \left\{
    y\in Y\colon 
    \sum_{n=1}^\infty \frac{\abs{\langle y,u_n\rangle_Y}}{\sigma_n}\smallnorm{v_n}_{L^\infty(\mathcal{M})} < \infty
    \right\}.
\end{equation}
This is a subspace of \(Y\), and
\(y\in U\) is
a somewhat stronger condition than \(\smallnorm{\pseudoinv{A}y}_{L^\infty(\mathcal{M})}<\infty\), indeed
\[
\smallnorm{\pseudoinv{A}y}_{L^\infty(\mathcal{M})} =\norm{
    \sum_{n=1}^\infty  \frac{\langle y, u_n\rangle_Y}{\sigma_n}    v_n
    }_{L^\infty(\mathcal{M})}\le \sum_{n=1}^\infty \frac{\abs{\langle y,u_n\rangle_Y}}{\sigma_n}\smallnorm{v_n}_{L^\infty(\mathcal{M})}.
\]
Similarly, if $y\in U$, then $T^c_\alpha y\in L^\infty(\mathcal{M})$.
We have the following convergence result, whose proof is similar to that of Theorem~\ref{thm:spectralfilter_pwl2approx}.

\begin{theorem}[Pointwise approximation in \(L^\infty(\mathcal{M})\)]
\label{thm:spectralfilter_pwuniformapprox}
    We have
    \begin{equation*}
        \smallnorm{\pseudoinv{A} y - T^c_\alpha y}_{L^\infty(\mathcal{M})}
        \xrightarrow[\alpha\to 0]{}0
    \end{equation*}
    for all \(y\in U\).
\end{theorem}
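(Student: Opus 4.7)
The plan is to mimic the structure of the proof of Theorem~\ref{thm:spectralfilter_pwl2approx}, replacing the use of $L^2$-orthogonality with the triangle inequality in $L^\infty(\mathcal{M})$, and replacing the Picard-type tail estimate in $\domain{\pseudoinv{A}}$ by the tail estimate available for $y\in U$. First I would write the error via the SVD, using the telescoping identity
\[
\frac{1}{\sigma_n}-\frac{\sigma_n}{\sigma_n^2+\alpha c_n}
= \frac{\alpha c_n}{\sigma_n(\sigma_n^2+\alpha c_n)},
\]
so that
\[
\pseudoinv{A}y - T^c_\alpha y
= \sum_{n=1}^\infty \frac{\alpha c_n}{\sigma_n(\sigma_n^2+\alpha c_n)}\langle y,u_n\rangle_Y\, v_n.
\]

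Next, unlike the $L^2$ case, I cannot use Pythagorean orthogonality; instead I would apply the triangle inequality in $L^\infty(\mathcal{M})$ termwise, giving
\[
\smallnorm{\pseudoinv{A}y - T^c_\alpha y}_{L^\infty(\mathcal{M})}
\le \sum_{n=1}^\infty \frac{\alpha c_n}{\sigma_n^2+\alpha c_n}\cdot \frac{\abs{\langle y,u_n\rangle_Y}}{\sigma_n}\,\smallnorm{v_n}_{L^\infty(\mathcal{M})}.
\]
The trivial bound $\alpha c_n/(\sigma_n^2+\alpha c_n)\le 1$ then dominates the summand by $\sigma_n^{-1}\abs{\langle y,u_n\rangle_Y}\smallnorm{v_n}_{L^\infty(\mathcal{M})}$, which is summable exactly because $y\in U$.

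Given $\varepsilon>0$, I would then split at some $N$ chosen large enough that the tail
\[
\sum_{n=N+1}^\infty \frac{\abs{\langle y,u_n\rangle_Y}}{\sigma_n}\,\smallnorm{v_n}_{L^\infty(\mathcal{M})}<\frac{\varepsilon}{2},
\]
which handles the tail of the error uniformly in $\alpha$. For the finite head $n\le N$, each factor satisfies $\alpha c_n/(\sigma_n^2+\alpha c_n)\le \alpha c_n/\sigma_n^2\to 0$ as $\alpha\to 0$, so the head is bounded by $\alpha\,C_N$ for a constant $C_N$ depending only on $y$, $c_1,\dots,c_N$, and $\sigma_1,\dots,\sigma_N$. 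Choosing $\alpha_0$ small enough that $\alpha_0 C_N<\varepsilon/2$ concludes the proof.

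The only subtlety, which is not really an obstacle, is recognizing that the hypothesis $y\in\domain{\pseudoinv{A}}$ used in Theorem~\ref{thm:spectralfilter_pwl2approx} is too weak here: it only gives $\ell^2$-summability of $\sigma_n^{-1}\abs{\langle y,u_n\rangle_Y}$, whereas the $L^\infty$ triangle inequality demands the absolute summability of $\sigma_n^{-1}\abs{\langle y,u_n\rangle_Y}\smallnorm{v_n}_{L^\infty(\mathcal{M})}$. This is precisely the condition encoded in the definition of $U$ in~\eqref{eq:uniformsourceset}, which is why that space is the natural domain for the $L^\infty$ convergence statement.
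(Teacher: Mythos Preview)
Your proposal is correct and follows essentially the same approach as the paper's own proof: both write the error via the SVD, split into a finite head and a tail at some $N$, control the tail uniformly in $\alpha$ using $\alpha c_n/(\sigma_n^2+\alpha c_n)\le 1$ together with the defining summability condition of $U$, and bound the head by $\alpha C_N$ via $\alpha c_n/(\sigma_n^2+\alpha c_n)\le \alpha c_n/\sigma_n^2$. Your closing remark on why $U$ replaces $\domain{\pseudoinv{A}}$ is apt and mirrors the paper's implicit reasoning.
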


\begin{proof}
As before, we need to show that for any  \(\varepsilon>0\) there  exists some \(\alpha_0>0\), such that for all $\alpha < \alpha_0,$
 $$
 \smallnorm{\pseudoinv{A} y - T^c_\alpha y}_{L^\infty(\mathcal{M})} < \varepsilon.
 $$

Since \(y\in U\),
we can select an integer \(N\) large enough so that
\begin{equation*}
    \sum_{n=N+1}^\infty \frac{\abs{\langle y,u_n\rangle_Y}}{\sigma_n}\smallnorm{v_n}_{L^\infty(\mathcal{M})} < 
    \frac{\varepsilon}{2}.
\end{equation*}
The error can now be bounded as follows:
\begin{align*}
    \smallnorm{\pseudoinv{A} y - T^c_\alpha y}_{L^\infty(\mathcal{M})}
    & =
    \norm{
    \sum_{n=1}^\infty \left( \frac{1}{\sigma_n} - \frac{\sigma_n}{\sigma_n^2+\alpha c_n} \right) \langle y, u_n\rangle_Y v_n
    }_{L^\infty(\mathcal{M})}
    \\&\leq
    \norm{
    \sum_{n=1}^N 
    \frac{\alpha c_n
    \langle y, u_n\rangle_Y}{\sigma_n\left(\sigma_n^2+\alpha c_n\right)} v_n
    }_{L^\infty(\mathcal{M})}+\norm{
    \sum_{n=N+1}^\infty 
    \frac{\alpha c_n
    \langle y, u_n\rangle_Y}{\sigma_n\left(\sigma_n^2+\alpha c_n\right)} v_n
    }_{L^\infty(\mathcal{M})}
    \\&<
    \alpha
    \sum_{n=1}^N 
    \frac{c_n
    \abs{\langle y, u_n\rangle_Y}}{\sigma_n^3}
    \smallnorm{v_n
    }_{L^\infty(\mathcal{M})}
    +
    \frac{\varepsilon}{2}
    =:
    \alpha C_N + \frac{\varepsilon}{2}.
\end{align*}
Choosing \(\alpha_0\) so that \(\alpha_0 C_N < \varepsilon/2\) yields the result.
\end{proof}

\subsubsection{Convergence rates on source sets}
Theorems \ref{thm:spectralfilter_pwl2approx} and \ref{thm:spectralfilter_pwuniformapprox} provide no rate of convergence in terms of \(\alpha\).
In order to establish a convergence rate,
we require some degree of ``smoothness'' of \(y\), something commonly known as a ``source condition'' in inverse problems \cite{engl2000regularization}.
Recall that the operator \(A\) is assumed to be compact, so that \(\sigma_n \to 0\) as $n \to \infty$.

\begin{definition}
\label{def:strongpicard}
    Let \(\eta\geq 0\). 
    We say that \(y\in Y\) satisfies the \emph{\(\eta\)-strong Picard criterion}
    if
    \begin{equation}
    \label{eq:picardstrong}
        P_\eta(y) := \sum_{n=1}^\infty
        \frac{\abs{\langle y, u_n \rangle_Y}^2}{
        \sigma_n^{2+\eta}
        }
        <
        \infty
    .\end{equation}
\end{definition}

\begin{lemma}
\label{thm:strongpicardnested}
    Suppose that \(A\) is a compact operator,
    and let \(\eta\geq 0\).
    The set of elements of \(Y\) that satisfy the \(\eta\)-strong Picard criterion,
    \begin{equation*}
        V_\eta = \left\{ y\in Y\colon P_\eta(y) < \infty \right\}
    \end{equation*}
    forms a subspace of \(Y\),
    and for any \(\mu\leq \eta\), we have
    \begin{equation*}
    V_\eta\subseteq V_\mu\subseteq V_0=\domain{\pseudoinv{A}}.
    \end{equation*}
\end{lemma}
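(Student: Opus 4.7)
The plan is to verify the three claims in turn: that $V_\eta$ is a linear subspace, that $V_\eta \subseteq V_\mu$ whenever $\mu \leq \eta$, and that $V_0$ coincides with $\domain{\pseudoinv{A}}$.

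For the subspace property, homogeneity under scalar multiplication is immediate since $P_\eta(\lambda y) = |\lambda|^2 P_\eta(y)$. For closure under addition, I would use the elementary inequality $|\langle y_1+y_2, u_n\rangle_Y|^2 \leq 2|\langle y_1, u_n\rangle_Y|^2 + 2|\langle y_2, u_n\rangle_Y|^2$, which immediately yields $P_\eta(y_1+y_2) \leq 2 P_\eta(y_1) + 2 P_\eta(y_2)$, so $y_1, y_2 \in V_\eta$ implies $y_1+y_2 \in V_\eta$.

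For the nested inclusion $V_\eta \subseteq V_\mu$ when $\mu \leq \eta$, the idea is to exploit that $(\sigma_n)_{n\in\N}$ is non-increasing and bounded above by $\sigma_1$, together with the assumption $\eta-\mu \geq 0$. Writing
\begin{equation*}
    \frac{1}{\sigma_n^{2+\mu}} = \frac{\sigma_n^{\eta-\mu}}{\sigma_n^{2+\eta}} \leq \frac{\sigma_1^{\eta-\mu}}{\sigma_n^{2+\eta}},
\end{equation*}
one obtains $P_\mu(y) \leq \sigma_1^{\eta-\mu}\, P_\eta(y)$ term by term, and hence $V_\eta \subseteq V_\mu$. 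Note that compactness of $A$ is not strictly needed for this monotonicity step — only the fact that the sequence $(\sigma_n)_{n \in \N}$ is bounded — though compactness underlies the whole singular value framework being used.

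Finally, for the identification $V_0 = \domain{\pseudoinv{A}}$, this is a direct reading of Picard's criterion as recorded earlier in the excerpt: the definition $P_0(y) = \sum_{n=1}^\infty \sigma_n^{-2}|\langle y, u_n\rangle_Y|^2$ coincides verbatim with the defining condition of $\domain{\pseudoinv{A}}$ given in Section~\ref{sec:IP}. No step here looks like a substantive obstacle; the only care needed is to keep the role of $\sigma_1$ (as a uniform upper bound on the singular values) explicit, since the argument would fail if $\sigma_n$ were allowed to be unbounded.
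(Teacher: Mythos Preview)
Your proof is correct. The subspace argument and the identification \(V_0=\domain{\pseudoinv{A}}\) match the paper's reasoning essentially verbatim.

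For the inclusion \(V_\eta\subseteq V_\mu\), however, you take a slightly different and cleaner route. The paper invokes compactness to find an index \(N\) beyond which \(\sigma_n\leq 1\), then splits \(P_\mu(y)\) into a finite head plus a tail where \(\sigma_n^{-\mu}\leq\sigma_n^{-\eta}\). Your argument instead uses the uniform bound \(\sigma_n\leq\sigma_1\) to get the single-line inequality \(P_\mu(y)\leq \sigma_1^{\eta-\mu}P_\eta(y)\). This avoids the split-sum altogether, yields an explicit constant, and, as you correctly observe, does not actually require \(\sigma_n\to 0\) --- only that the singular values are bounded above. The paper's version has the mild advantage that its tail comparison constant is \(1\), but at the cost of an unquantified finite remainder; overall your approach is the more economical of the two.
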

\begin{proof}

For \(y,z\in Y\), and \(\beta,\gamma\in\C\), we have \
\begin{equation*}
P_\eta(\beta y+\gamma z)\leq 2\abs{\beta}^2P_\eta(y)+2\abs{\gamma}^2P_\eta(z) < \infty
,\end{equation*}
where we have used the triangle inequality and the basic fact that \((a+b)^2\leq 2a^2+2b^2\) for any \(a,b\in\R\). 
This proves that \(V_\eta\) is a subspace of \(Y\).

Now let \(0\leq \mu\leq \eta\).
Since \(A\) is compact, we can find an \(N\in\N\) such that \(\sigma_n\leq 1\) for all \(n\geq N\).
Then \(\sigma_n^{-1} \geq 1\), so \(\sigma_n^{-\mu} \leq \sigma_n^{-\eta}\) for \(n\geq N\),
and therefore
\begin{align*}
    P_\mu(y)
     & \leq
        \sum_{n=1}^N
        \sigma_n^{-\mu}\frac{\abs{\langle y, u_n \rangle_Y}^2}{
        \sigma_n^{2}
        }
        +
        \sum_{n=N+1}^\infty
        \sigma_n^{-\eta}\frac{\abs{\langle y, u_n \rangle_Y}^2}{
        \sigma_n^{2}
        }
    \\ & \leq
        \sum_{n=1}^N
        \sigma_n^{-\mu}\frac{\abs{\langle y, u_n \rangle_Y}^2}{
        \sigma_n^{2}
        }
        +
        P_\eta(y)
.\end{align*}
In other words, \(P_\eta(y)<\infty\) implies \(P_\mu(y)<\infty\),
and therefore \(V_\eta\subseteq V_\mu\).
\end{proof}

\begin{remark}
    For \(\eta>0\),
    inclusion in the space \(V_\eta\) can also be characterized in terms of
    \emph{source sets} in \(X\).
    One can show that
    \begin{equation*}
        y\in V_\eta
        \quad\text{if and only if}\quad 
        \pseudoinv{A}y \in X_{\eta/4}=\range{\left(A^*A\right)^{\eta/4}}
    \end{equation*}
    (see Proposition 3.13 of \cite{engl2000regularization}).
    However, the explicit dependence of \(V_\eta\) on the singular values of \(A\) through \eqref{eq:picardstrong} is more useful to our analysis, so we state our results in terms of these subspaces of \(Y\).
\end{remark}

To establish rates of convergence for \(T^c_\alpha\) on the source sets,
we follow a similar approach as in Chapter 4 of \cite{engl2000regularization}.
However, it needs to be adapted to the current case where the spectral filter does not depend exclusively on the singular values \(\sigma_n\), but rather on their index \(n\).
By imposing a condition on the growth of the coefficients \(c_n\) relative to the decay of \(\sigma_n\), standard proofs can be emulated.
This comes at the cost of having to assume more smoothness of the data.
Before we state the next convergence results, we need the following calculus fact.

\begin{lemma}
    \label{thm:spectralfilter_ratebound}
    Let \(\eta,\alpha>0\) and define the function \(h\colon [0,\infty)\to \R\) by
    \begin{equation*}
        h(\sigma) = \frac{\sigma^\eta}{\left(\sigma^2 + \alpha\right)^2},\qquad \sigma>0.
    \end{equation*}
    If \(\eta < 4\), then
    \begin{equation*}
        h(\sigma) \leq C_\eta \alpha^{\eta/2-2}
    \end{equation*}
    for some \(C_\eta>0\),
    and if \(\eta\geq 4\), then \(h\) is strictly increasing.
\end{lemma}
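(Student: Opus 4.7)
The plan is to prove both statements by elementary calculus: compute $h'(\sigma)$, factor it, and analyze its sign according to whether $\eta<4$ or $\eta\geq 4$.

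First I would differentiate:
\begin{equation*}
h'(\sigma) = \frac{\eta\sigma^{\eta-1}(\sigma^2+\alpha)^2 - \sigma^\eta\cdot 2(\sigma^2+\alpha)\cdot 2\sigma}{(\sigma^2+\alpha)^4} = \frac{\sigma^{\eta-1}\bigl[(\eta-4)\sigma^2 + \eta\alpha\bigr]}{(\sigma^2+\alpha)^3}.
\end{equation*}
For the case $\eta\geq 4$, the bracket is bounded below by $\eta\alpha>0$ for all $\sigma\geq 0$, so $h'(\sigma)>0$ on $(0,\infty)$, which establishes strict monotonicity.

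For the case $0<\eta<4$, the bracket vanishes precisely at $\sigma_*=\sqrt{\eta\alpha/(4-\eta)}$, is positive on $(0,\sigma_*)$, and negative on $(\sigma_*,\infty)$. Since $h(0)=0$ (as $\eta>0$) and $h(\sigma)\to 0$ as $\sigma\to\infty$ (since $\eta<4$), the point $\sigma_*$ is the global maximum. Evaluating at $\sigma_*$ gives $\sigma_*^2+\alpha = \frac{4\alpha}{4-\eta}$, hence
\begin{equation*}
h(\sigma_*) = \frac{(\eta\alpha/(4-\eta))^{\eta/2}}{\bigl(4\alpha/(4-\eta)\bigr)^2} = \underbrace{\frac{(4-\eta)^{2-\eta/2}\eta^{\eta/2}}{16}}_{=:C_\eta}\,\alpha^{\eta/2-2},
\end{equation*}
which is the required bound.

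There is essentially no obstacle here: the lemma is a one-variable optimization problem, and the only thing to be careful about is verifying that the critical point is indeed a maximum (by checking the boundary behavior at $0$ and $\infty$, which uses $\eta>0$ and $\eta<4$ respectively) so that the supremum is attained at $\sigma_*$ and not approached from a boundary.
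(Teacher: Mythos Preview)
Your proof is correct and follows essentially the same approach as the paper: compute $h'(\sigma)$, factor it, and analyze the sign of $(\eta-4)\sigma^2+\eta\alpha$. The only cosmetic difference is that the paper verifies the critical point is a maximum via the second derivative, whereas you use the sign change of $h'$ together with the boundary behavior $h(0)=0$ and $h(\sigma)\to 0$; both arguments lead to the same constant $C_\eta$.
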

\begin{proof}
See \ref{sec:appendix_proofs}.
\end{proof}

\begin{theorem}[Pointwise approximation in \(X\) with convergence rate]
\label{thm:spectralfilter_pwl2approx_rate}
    Suppose that
    \(A\) is compact,
    and that the weights \((c_n)_{n\in\N}\) satisfy
    \begin{equation}
        \label{eq:spectralfilter_pwl2approx_rate_condition}
        c_n \leq C \sigma_n^{-\beta}\quad\text{for all}\ n\in\N,
    \end{equation}
    for some \(C, \beta>0\).
    Let \(y\in V_{\eta+2\beta}\) for some \(\eta>0\).
    Then there exists a constant \(C'>0\), depending only on $C$, $\beta$, $\eta$, $\sigma_1$ and $y$,
    such that
    \begin{equation*}
        \smallnorm{\pseudoinv{A} y - T^c_\alpha y}_{L^2(\mathcal{M})}
        \leq 
        C' \alpha^{\min\{1,\eta/4\}}
    \end{equation*}
    for all \(\alpha>0\).
\end{theorem}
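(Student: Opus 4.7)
The plan is to start from the explicit series representation of the error, reduce the problem to a term-wise bound on a scalar function of the singular values, and then invoke Lemma~\ref{thm:spectralfilter_ratebound} to get the rate. Concretely, using the SVD and Parseval's identity,
\begin{equation*}
    \smallnorm{\pseudoinv{A} y - T^c_\alpha y}_X^2
    =
    \sum_{n=1}^\infty
    \left(\frac{1}{\sigma_n}-\frac{\sigma_n}{\sigma_n^2+\alpha c_n}\right)^{\!2}
    \abs{\langle y,u_n\rangle_Y}^2
    =
    \alpha^2
    \sum_{n=1}^\infty
    \frac{c_n^2}{\sigma_n^2\left(\sigma_n^2+\alpha c_n\right)^2}
    \abs{\langle y,u_n\rangle_Y}^2.
\end{equation*}
The strategy is to peel off a piece that matches the Picard weight defining \(V_{\eta+2\beta}\) and leave a purely scalar factor that decays in \(\alpha\).

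Next, I would use the hypothesis \(c_n\leq C\sigma_n^{-\beta}\) to absorb one power of \(c_n\) and regroup the summand as
\begin{equation*}
    \frac{c_n^2}{\sigma_n^2\left(\sigma_n^2+\alpha c_n\right)^2}\abs{\langle y,u_n\rangle_Y}^2
    \;\leq\;
    C^2\,
    \frac{\sigma_n^{\eta}}{\left(\sigma_n^2+\alpha c_n\right)^2}\,\cdot\,\frac{\abs{\langle y,u_n\rangle_Y}^2}{\sigma_n^{2+\eta+2\beta}}.
\end{equation*}
Summing the second factor yields \(P_{\eta+2\beta}(y)<\infty\) because \(y\in V_{\eta+2\beta}\), so it remains to control \(\sup_n\sigma_n^{\eta}/(\sigma_n^2+\alpha c_n)^2\). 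Using \(c_n\geq c_0\) in the denominator, this supremum is at most \(\sup_{\sigma\in(0,\sigma_1]} h(\sigma)\) where \(h(\sigma)=\sigma^\eta/(\sigma^2+\alpha c_0)^2\) is precisely the function treated in Lemma~\ref{thm:spectralfilter_ratebound}.

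Now I would split on the two regimes of the lemma. For \(0<\eta<4\), Lemma~\ref{thm:spectralfilter_ratebound} gives \(h(\sigma)\leq C_\eta (\alpha c_0)^{\eta/2-2}\) uniformly in \(\sigma\), which when combined with the \(\alpha^2\) prefactor gives
\begin{equation*}
    \smallnorm{\pseudoinv{A}y - T^c_\alpha y}_X^2\leq C^2 C_\eta c_0^{\eta/2-2}\,P_{\eta+2\beta}(y)\,\alpha^{\eta/2},
\end{equation*}
yielding the rate \(\alpha^{\eta/4}\). For \(\eta\geq 4\), Lemma~\ref{thm:spectralfilter_ratebound} states that \(h\) is increasing, so on \((0,\sigma_1]\) it is bounded by \(h(\sigma_1)\leq \sigma_1^{\eta-4}\) (using \(\sigma_1^2+\alpha c_0\geq \sigma_1^2\)), and the \(\alpha^2\) prefactor survives untouched, giving the rate \(\alpha\). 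Combining the two cases yields the exponent \(\min\{1,\eta/4\}\), with the constant \(C'\) depending only on \(C,\beta,\eta,\sigma_1\), and \(P_{\eta+2\beta}(y)\).

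I do not expect any genuine obstacle: the only slightly delicate point is the regrouping in the middle step, where the exponent of \(\sigma_n\) that ends up in the Picard weight has to match the one remaining in the scalar function that Lemma~\ref{thm:spectralfilter_ratebound} bounds. The choice of distributing \(2\beta\) powers to cancel \(c_n^2\) via the hypothesis, and \(\eta\) powers to the Picard denominator, is forced once one decides which scalar function to bound, and the split into \(\eta<4\) and \(\eta\geq 4\) then reproduces the saturation phenomenon familiar from classical Tikhonov theory in \cite{engl2000regularization}.
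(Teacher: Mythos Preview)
Your argument is correct and follows the same overall strategy as the paper: write the squared error via the SVD, isolate the scalar factor \(\sigma_n^{\eta}/(\sigma_n^2+\alpha c_n)^2\), and control it with Lemma~\ref{thm:spectralfilter_ratebound} in the two regimes \(\eta<4\) and \(\eta\geq 4\). The one bookkeeping difference is that you bound \(c_n\) twice---by \(C\sigma_n^{-\beta}\) in the numerator and by \(c_0\) in the denominator---and then invoke the lemma with \(\alpha c_0\), whereas the paper applies the lemma directly with \(\alpha c_n\) in place of \(\alpha\) and only afterwards uses \(c_n\leq C\sigma_n^{-\beta}\). Your route has the advantage of landing immediately on \(P_{\eta+2\beta}(y)\), avoiding the paper's detour through \(P_{\eta+\beta\eta/2}(y)\) and the inclusion \(V_{\eta+2\beta}\subseteq V_{\eta+\beta\eta/2}\); the trade-off is that your constant acquires an extra factor \(c_0^{\eta/2-2}\), so it depends on \(c_0\) in addition to the quantities \(C,\beta,\eta,\sigma_1,y\) listed in the statement, while the paper's constant does not.
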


\begin{proof}
Taking squared norms, we obtain 
\begin{align*}
    \smallnorm{\pseudoinv{A} y - T^c_\alpha y}_{L^2(\mathcal{M})}^2
    & = \sum_{n=1}^\infty \frac{(\alpha c_n)^2 \abs{\langle y, u_n\rangle_Y}^2}{\sigma_n^2\left(\sigma_n^2+\alpha c_n\right)^2}
    \\ & =
    \sum_{n=1}^\infty \frac{ \abs{\langle y, u_n\rangle_Y}^2}{\sigma_n^{2+\eta}}(\alpha c_n)^2
    \frac{\sigma_n^\eta}{\left(\sigma_n^2+\alpha c_n\right)^2}.
\end{align*}
Now, we employ Lemma \ref{thm:spectralfilter_ratebound}.
If \(\eta < 4\), then
\begin{align*}
    \smallnorm{\pseudoinv{A} y - T^c_\alpha y}_{L^2(\mathcal{M})}^2
    &\leq
    C_\eta
    \sum_{n=1}^\infty \frac{ \abs{\langle y, u_n\rangle_Y}^2}{\sigma_n^{2+\eta}}
    (\alpha c_n)^2
    (\alpha c_n)^{\eta/2-2}\\
    &\leq
    \alpha^{\eta/2}
    C_\eta
    \sum_{n=1}^\infty \frac{ \abs{\langle y, u_n\rangle_Y}^2}{\sigma_n^{2+\eta}}
    (C\sigma_n^{-\beta})^{\eta/2}\\
    &\leq
    \alpha^{\eta/2}
    C_\eta'
    \sum_{n=1}^\infty \frac{ \abs{\langle y, u_n\rangle_Y}^2}{\sigma_n^{2+\eta+\beta\eta/2}}\\
    &=
    \alpha^{\eta/2}
    C_\eta'
    P_{\eta+\beta\eta/2}(y),
\end{align*}
and since \(\eta\leq 4\), we have \(y\in V_{\eta+2\beta}\subseteq V_{\eta+\beta\eta/2}\).
If \(\eta \geq 4\), then, by monotonicity of $h(\sigma)$, 
\begin{align*}
    \smallnorm{\pseudoinv{A} y - T^c_\alpha y}_{L^2(\mathcal{M})}^2
    &\leq
    \sum_{n=1}^\infty \frac{ \abs{\langle y, u_n\rangle_Y}^2}{\sigma_n^{2+\eta}}
    (\alpha c_n)^2
    \frac{\sigma_1^\eta}{\left(\sigma_1^2+\alpha c_n\right)^2}\\
    &\leq
    \alpha^2
    C^2
    \sigma_1^{\eta-4}
    \sum_{n=1}^\infty \frac{ \abs{\langle y, u_n\rangle_Y}^2}{\sigma_n^{2+\eta+2\beta}}=
    \alpha^2
    C^2
    \sigma_1^{\eta-4}
    P_{\eta+2\beta}(y).
\end{align*}
Taking square roots on both sides of the above inequalities gives the result.
\end{proof}

\begin{remark}
    Tikhonov regularization satisfies the conditions of Theorem~\ref{thm:spectralfilter_pwl2approx_rate} with \(\beta=0\),
    and the above rate is exactly the optimal rate for Tikhonov regularization (see  \cite[Example~4.15]{engl2000regularization}).
\end{remark}

Next, we consider uniform approximation on the sets \(V_\eta\), \(\eta>0\).
Again, we need a condition on the SVD of \(A\)
that relates
the growth of \(\smallnorm{v_n}_{L^\infty(\mathcal{M})}\) to 
the decay of \(\sigma_n\).

\begin{lemma}
\label{thm:uniformpicard}
    If the operator \(A\) satisfies
    \begin{equation}
        \label{eq:spectralrelativegrowth}
        \sum_{n=1}^\infty
        \sigma_n^{\eta} \smallnorm{v_n}_{L^\infty(\mathcal{M})}^2
        < \infty,
    \end{equation}
    then \(V_\eta\subseteq U\) (defined in Eq.~(\ref{eq:uniformsourceset})).
    Consequently, \(\pseudoinv{A}y\in L^\infty(\mathcal{M})\)
    for all \(y\in V_\eta\). 
\end{lemma}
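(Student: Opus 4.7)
The plan is to derive the inclusion $V_\eta\subseteq U$ from a single application of the Cauchy--Schwarz inequality, by factoring the generic summand in the definition \eqref{eq:uniformsourceset} of $U$ into two pieces, one tailored to the $\eta$-strong Picard criterion \eqref{eq:picardstrong} and the other to the standing hypothesis \eqref{eq:spectralrelativegrowth}. The choice of split is dictated by matching exponents of $\sigma_n$.

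Concretely, given $y\in V_\eta$, I would write
\begin{equation*}
    \frac{\abs{\langle y, u_n\rangle_Y}}{\sigma_n}\smallnorm{v_n}_{L^\infty(\mathcal{M})}
    =
    \frac{\abs{\langle y, u_n\rangle_Y}}{\sigma_n^{1+\eta/2}}
    \cdot
    \sigma_n^{\eta/2}\smallnorm{v_n}_{L^\infty(\mathcal{M})},
\end{equation*}
and then apply Cauchy--Schwarz in $\ell^2(\N)$ to the sum over $n$. The first factor, squared and summed, gives exactly $P_\eta(y)$, which is finite since $y\in V_\eta$. The second factor, squared and summed, gives $\sum_n \sigma_n^\eta\smallnorm{v_n}_{L^\infty(\mathcal{M})}^2$, which is finite by assumption \eqref{eq:spectralrelativegrowth}. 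This yields
\begin{equation*}
    \sum_{n=1}^\infty\frac{\abs{\langle y, u_n\rangle_Y}}{\sigma_n}\smallnorm{v_n}_{L^\infty(\mathcal{M})}
    \leq
    \sqrt{P_\eta(y)}\,\sqrt{\sum_{n=1}^\infty\sigma_n^\eta\smallnorm{v_n}_{L^\infty(\mathcal{M})}^2}<\infty,
\end{equation*}
so $y\in U$, as desired.

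For the final assertion that $\pseudoinv{A}y\in L^\infty(\mathcal{M})$ for every $y\in V_\eta$, I would invoke Lemma~\ref{thm:strongpicardnested} to note $V_\eta\subseteq V_0=\domain{\pseudoinv{A}}$, so the SVD expansion $\pseudoinv{A}y=\sum_n \sigma_n^{-1}\langle y,u_n\rangle_Y v_n$ converges in $X$. By the inclusion just proved, the same series converges absolutely in $L^\infty(\mathcal{M})$, via the triangle inequality applied to the partial sums, so its limit belongs to $L^\infty(\mathcal{M})$ and must coincide with $\pseudoinv{A}y$ (since convergence in $L^2(\mathcal{M})$ identifies the limit up to a.e.\ equality). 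There is essentially no obstacle here: the lemma is a bookkeeping observation designed to align the summability assumption on the singular system with the Picard-type assumption on $y$ through Cauchy--Schwarz.
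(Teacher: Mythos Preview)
Your argument is correct and essentially identical to the paper's: the same factorization $\sigma_n^{-1}=\sigma_n^{-(1+\eta/2)}\cdot\sigma_n^{\eta/2}$ followed by Cauchy--Schwarz (which the paper phrases as H\"older's inequality for series) is exactly what is done there. Your additional remark explaining why the $L^2$ and $L^\infty$ limits of the partial sums coincide is a welcome bit of care that the paper leaves implicit.
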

\begin{proof}
   Take $y\in V_\eta$. We use the triangle inequality and H\"older's inequality for infinite series:
    \begin{align*}
        \smallnorm{\pseudoinv{A}y}_{L^\infty(\mathcal{M})}
        &\leq
        \sum_{n=1}^\infty
        \frac{\abs{\langle y, u_n\rangle_Y}}{\sigma_n}\norm{v_n}_{L^\infty(\mathcal{M})}
        \\
        &\leq
        \norm{
        \left(
        \frac{\abs{\langle y, u_n\rangle}}{\sigma_n^{1+\eta/2}}
        \right)_{n\in\N}
        }_{\ell^2(\N)}
        \norm{
        \left(
        \sigma_n^{\eta/2}\norm{v_n}_\infty
        \right)_{n\in\N}}_{\ell^2(\N)},\\
    \end{align*}
    which is finite by 
    \eqref{eq:spectralrelativegrowth} and using that \(y\in V_\eta\).
\end{proof}

\begin{remark}
    Condition \eqref{eq:spectralrelativegrowth}
    significantly limits the choice for the parameter \(\eta\).
    For example, if \(\norm{v_n}_{L^\infty(\mathcal{M})}=1/\sigma_n\), then \(\eta\) is always at least 2 if the condition is to be satisfied.
    Consider a mildly ill-posed problem satisfying
    \(\smallnorm{v_n}_{L^\infty(\mathcal{M})}=1/\sigma_n = n^s\) for some \(s>0\).
    Then 
    \begin{equation*}
        \sigma_n^\eta
        \smallnorm{v_n}_{L^\infty(\mathcal{M})}^2
        =
        n^{-s\eta+2s}
        =
        n^{s(2-\eta)}
    \end{equation*}
    so \eqref{eq:spectralrelativegrowth} is satisfied if and only if \(\eta>2+1/s\).
    For severely ill-posed problems with \(\smallnorm{v_n}_{L^\infty(\mathcal{M})}=1/\sigma_n = \expon{sn}\),
    any \(\eta>2\) suffices.
\end{remark}

\begin{theorem}[Pointwise approximation in \(L^\infty(\mathcal{M})\) with convergence rate]
\label{thm:spectralfilter_pwlinfapprox_rate}
    Suppose that
    \(A\) is compact and that its SVD satisfies
    \begin{equation}\label{eq:spectralrelativegrowth2}
        \sum_{n=1}^\infty
        \sigma_n^{\eta} \smallnorm{v_n}_{L^\infty(\mathcal{M})}^2
        <\infty.
    \end{equation}
    Moreover, suppose that the weights \((c_n)_{n\in\N}\) satisfy
    \begin{equation*}
        c_n \leq C \sigma_n^{-\beta}\quad\text{for all}\ n\in\N,
    \end{equation*}
    for some \(C, \beta>0\).
    Let \(y\in V_{2\eta+2\beta}\).
    Then there exists a constant \(C'>0\)
    such that
    \begin{equation*}
    \smallnorm{\pseudoinv{A} y - T^c_\alpha y}_{L^\infty(\mathcal{M})}
        \leq 
        C' \alpha^{\min\{1,\eta/4\}}
    \end{equation*}
    for all \(\alpha>0\).
\end{theorem}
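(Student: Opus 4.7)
The plan is to reduce the $L^\infty$ bound to an $\ell^2$ bound via a weighted Cauchy--Schwarz split that uses the summability hypothesis \eqref{eq:spectralrelativegrowth2}, and then re-run the argument of Theorem~\ref{thm:spectralfilter_pwl2approx_rate} on the resulting $\ell^2$ sum. Writing the error via the SVD,
\begin{equation*}
\pseudoinv{A}y - T^c_\alpha y = \sum_{n=1}^\infty \frac{\alpha c_n}{\sigma_n(\sigma_n^2 + \alpha c_n)}\langle y, u_n\rangle_Y v_n,
\end{equation*}
I apply the triangle inequality in $L^\infty(\mathcal{M})$ to move the norm inside the sum, and then factor each summand as $a_n b_n$ with $b_n = \sigma_n^{\eta/2}\smallnorm{v_n}_{L^\infty(\mathcal{M})}$. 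Hypothesis \eqref{eq:spectralrelativegrowth2} gives $\sum_n b_n^2 < \infty$, so Cauchy--Schwarz yields
\begin{equation*}
\smallnorm{\pseudoinv{A}y - T^c_\alpha y}_{L^\infty(\mathcal{M})}^2 \le \Bigl(\sum_{n=1}^\infty\sigma_n^\eta\smallnorm{v_n}_{L^\infty(\mathcal{M})}^2\Bigr)\sum_{n=1}^\infty \frac{(\alpha c_n)^2|\langle y,u_n\rangle_Y|^2}{\sigma_n^{2+\eta}(\sigma_n^2+\alpha c_n)^2}.
\end{equation*}

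The second factor is formally the squared $L^2$ error from Theorem~\ref{thm:spectralfilter_pwl2approx_rate} but with an extra $\sigma_n^{-\eta}$ weight, so I would run the same argument with $\eta$ replaced by $2\eta$. Factoring out $|\langle y,u_n\rangle_Y|^2/\sigma_n^{2+2\eta}$ leaves the expression $(\alpha c_n)^2\sigma_n^\eta/(\sigma_n^2+\alpha c_n)^2$, which is exactly what Lemma~\ref{thm:spectralfilter_ratebound} controls (with ``$\alpha$'' replaced by $\alpha c_n$). When $\eta < 4$, the lemma gives $\sigma_n^\eta/(\sigma_n^2+\alpha c_n)^2 \le C_\eta(\alpha c_n)^{\eta/2 - 2}$, and combining with $c_n\le C\sigma_n^{-\beta}$ bounds the second factor above by a constant times $\alpha^{\eta/2} P_{2\eta + \beta\eta/2}(y)$. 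Since $\eta \le 4$ implies $\beta\eta/2 \le 2\beta$, Lemma~\ref{thm:strongpicardnested} gives $V_{2\eta+2\beta}\subseteq V_{2\eta+\beta\eta/2}$, so this is finite. When $\eta\ge 4$, monotonicity of $h(\sigma)=\sigma^\eta/(\sigma^2+\alpha c_n)^2$ gives $h(\sigma_n)\le\sigma_1^{\eta-4}$, and bounding $(\alpha c_n)^2\le \alpha^2 C^2\sigma_n^{-2\beta}$ yields a constant times $\alpha^2 P_{2\eta+2\beta}(y)$, finite by hypothesis. Taking square roots and combining everything produces the rate $\alpha^{\min\{1,\eta/4\}}$.

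The only subtle point is the exponent bookkeeping: the Cauchy--Schwarz weight $\sigma_n^{\eta/2}$ is dictated by \eqref{eq:spectralrelativegrowth2}, and this in turn doubles the required source-space index from $\eta+2\beta$ (as in Theorem~\ref{thm:spectralfilter_pwl2approx_rate}) to $2\eta+2\beta$ here. The ``$2\eta+2\beta$'' in the hypothesis is precisely $\eta$ (used to cash in the $L^\infty$ summability of the right singular vectors through Cauchy--Schwarz) plus $\eta+2\beta$ (used to close the $L^2$-style rate argument). Once this split is in place, everything else is a direct reuse of Lemma~\ref{thm:spectralfilter_ratebound}, so I do not anticipate any other genuine obstacle.
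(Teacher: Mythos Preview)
Your proposal is correct and follows essentially the same approach as the paper: triangle inequality, a Cauchy--Schwarz split with weight $\sigma_n^{\eta/2}\smallnorm{v_n}_{L^\infty(\mathcal{M})}$ to exploit \eqref{eq:spectralrelativegrowth2}, and Lemma~\ref{thm:spectralfilter_ratebound} to handle the remaining factor, with the source condition $y\in V_{2\eta+2\beta}$ closing the sum in both the $\eta<4$ and $\eta\ge 4$ cases. The only cosmetic difference is the order of operations---you apply Cauchy--Schwarz first and then the lemma termwise, whereas the paper bounds $\sigma_n^{\eta/2}/(\sigma_n^2+\alpha c_n)$ via the lemma first and then applies Cauchy--Schwarz---but the resulting estimates and exponents are identical.
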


\begin{proof}
We use the triangle inequality and rearrange exponents:
\begin{align*}
    \smallnorm{\pseudoinv{A} y - T^c_\alpha y}_{L^\infty(\mathcal{M})}
    & \leq
    \sum_{n=1}^\infty 
    \frac{\alpha c_n
    \abs{\langle y, u_n \rangle_Y}}{\sigma_n\left(\sigma_n^2+\alpha c_n\right)}
    \smallnorm{v_n
    }_{L^\infty(\mathcal{M})}\\
    & =
    \sum_{n=1}^\infty 
    \frac{ \abs{\langle y, u_n \rangle_Y}}{
    \sigma_n^{1+\eta}}
    (\alpha c_n)
    \frac{\sigma_n^{\eta/2}}
    {\sigma_n^2+\alpha c_n}
    \sigma_n^{\eta/2}
    \smallnorm{v_n}_{L^\infty(\mathcal{M})}.
\end{align*}
As in the proof of Theorem \ref{thm:spectralfilter_pwl2approx_rate},
we now use Lemma \ref{thm:spectralfilter_ratebound},
by remarking that since the square root is an increasing function, \(\sigma^{\eta/2}(\sigma^2+\alpha)^{-1}\) has the same maxima as \(h(\sigma)\).
For \(\eta< 4\) we get
\begin{align*}
    \smallnorm{\pseudoinv{A} y - T^c_\alpha y}_{L^\infty(\mathcal{M})}
    & \leq
    C'
    \sum_{n=1}^\infty 
    \frac{ \abs{\langle y, u_n \rangle_Y}}{
    \sigma_n^{1+\eta}}
    (\alpha c_n)
    (\alpha c_n)^{\eta/4-1}
    \sigma_n^{\eta/2}
    \smallnorm{v_n}_{L^\infty(\mathcal{M})}\\
    & \leq
    \alpha^{\eta/4}
    C'
    \sum_{n=1}^\infty 
    \frac{ \abs{\langle y, u_n \rangle_Y}}{
    \sigma_n^{1+\eta}}
    (C\sigma_n^{-\beta})^{\eta/4}
    \sigma_n^{\eta/2}
    \smallnorm{v_n}_{L^\infty(\mathcal{M})}\\
    &\leq
    \alpha^{\eta/4} C''
    \norm{\left(
    \frac{
    \abs{\langle y, u_n \rangle_Y}}{
    \sigma_n^{1+\eta+\beta\eta/4}}    
    \right)_{n\in\N}}_{\ell^2(\N)}
    \norm{\left(
    \sigma_n^{\eta/2}
    \smallnorm{v_n
    }_{L^\infty(\mathcal{M})}
    \right)_{n\in\N}}_{\ell^2(\N)}.
\end{align*}
The two $\ell^2$ norms are finite, because of (\ref{eq:spectralrelativegrowth2}) and using that \(y\in V_{2\eta+2\beta}\subseteq V_{2\eta+\beta\eta/2}\) for \(\eta\leq 4\).
For \(\eta\geq 4\):
\begin{align*}
    \smallnorm{\pseudoinv{A} y - T^c_\alpha y}_{L^\infty(\mathcal{M})}
    & \leq
    C'
    \sum_{n=1}^\infty 
    \frac{ \abs{\langle y, u_n \rangle_Y}}{
    \sigma_n^{1+\eta}}
    (\alpha c_n)
    \frac{\sigma_1^{\eta/2}}{\sigma_1^2 + \alpha c_n}
    \sigma_n^{\eta/2}
    \smallnorm{v_n}_{L^\infty(\mathcal{M})}\\
    & \leq
    \alpha
    C'
    \sigma_1^{\eta/2-2}
    \sum_{n=1}^\infty 
    \frac{ \abs{\langle y, u_n \rangle_Y}}{
    \sigma_n^{1+\eta}}
    (C\sigma_n^{-\beta})
    \sigma_n^{\eta/2}
    \smallnorm{v_n}_{L^\infty(\mathcal{M})}\\
    &\leq
    \alpha
    C''
    \sigma_1^{\eta/2-2}
    \norm{\left(
    \frac{
    \abs{\langle y, u_n \rangle_Y}}{
    \sigma_n^{1+\eta+\beta}}    
    \right)_{n\in\N}}_{\ell^2(\N)}
    \norm{\left(
    \sigma_n^{\eta/2}
    \smallnorm{v_n
    }_{L^\infty(\mathcal{M})}
    \right)_{n\in\N}}_{\ell^2(\N)},
\end{align*}
which, similarly, is finite because of (\ref{eq:spectralrelativegrowth2}) and $y \in V_{2\eta+2\beta}.$
\end{proof}

\subsubsection{Boundedness of the spectral filter}\label{Sec:boundedness}

We now have a recipe for a spectral filtering operator that guarantees convergence both in \(X=L^2(\mathcal{M})\) and in \(X'=L^\infty(\mathcal{M})\),
provided that certain conditions are satisfied. We have also noted at the beginning of this section, that this operator is bounded from $Y$ into $X$. However, for it to be indeed a regularization as an operator $T_\alpha^c: Y \to X'$, we need to establish that it is (under certain conditions) a bounded operator with respect to these spaces. 
We assume that \(A\) is compact, and that \(v_n\in L^\infty(\mathcal{M})\) for all \(n\in\N\).

As for the convergence rate results, we will again suppose that  $(\sigma_n^{\eta/2} \|v_n\|_\infty) \in \ell^2(\mathbb{N})$ for some $\eta>0$, cf.\ (\ref{eq:spectralrelativegrowth2}).
Under this condition, we can always find a sequence of weights
\((c_n)_{n\in\N}\)
such that \(T_\alpha^c: Y \to X'\) is bounded.

\begin{proposition}
\label{thm:spectralfilterbounded}
Let $A$ be a compact operator with SVD such that \(v_n\in L^\infty(\mathcal{M})\) for all \(n\in\N\) and let (\ref{eq:spectralrelativegrowth2}) hold for some $\eta>0$. If $\eta \leq 2,$ any lower bounded sequence of weights $c_n \geq c_0>0$ results in $T_\alpha^c: Y \to X'$ being bounded. If $\eta>2,$ any choice $c_n \geq C \sigma_n^{1-\eta/2},$ with arbitrary constant $C>0$ implies boundedness of  $T_\alpha^c: Y \to X'$. In both cases,
\[
\left\|T_\alpha^c  \right\|_{Y\to L^\infty(\mathcal{M})}\le \frac{C'}{\alpha},\qquad \alpha>0,
\]
for some $C'>0$ independent of $\alpha$.
\end{proposition}
\begin{proof}
If $\eta \leq 2$, for any $y \in Y,$  
\begin{align*}
\left\|T_\alpha^c y \right\|_{L^\infty(\mathcal{M})} &\leq \sum_{n=1}^\infty \frac{\sigma_n}{\sigma_n^2+\alpha c_n} \left|\langle y,u_n \rangle_Y\right| \left\|v_n\right\|_{\infty}\\
&\leq \frac{\sigma_1^{1-\eta/2}}{\alpha c_0} \sum_{n=1}^\infty \left|\langle y,u_n \rangle_Y\right| \sigma_n^{\eta/2} \left\|v_n\right\|_{\infty}\\
&\leq \frac{\sigma_1^{1-\eta/2}}{\alpha c_0}   \left\|\left( \sigma_n^{\eta/2} \left\|v_n\right\|_\infty \right)_{n \in \mathbb{N}} \right\|_{\ell^2(\mathbb{N})} \cdot \left\|y\right\|_Y.
\end{align*}
If $\eta>2,$ choosing $c_n \geq C \sigma_n^{1-\eta/2}$ yields,
$$
\frac{\sigma_n^{1-\eta/2}}{\sigma_n^2+\alpha c_n} \leq \frac{1}{\alpha C},
$$
and we can similarly deduce that for all $y \in Y,$
\begin{align*}
\left\|T_\alpha^c y \right\|_{L^\infty(\mathcal{M})} &\leq \sum_{n=1}^\infty \frac{\sigma_n}{\sigma_n^2+\alpha c_n} \left|\langle y,u_n \rangle_Y\right| \left\|v_n\right\|_{\infty} \leq \frac{1}{\alpha C} \left\|\left( \sigma_n^{\eta/2} \left\|v_n\right\|_\infty \right)_{n \in \mathbb{N}} \right\|_{\ell^2(\mathbb{N})} \cdot \left\|y\right\|_Y.
\end{align*}
\end{proof}

By Lemma~\ref{thm:uniformpicard} and Theorem~\ref{thm:spectralfilter_pwuniformapprox},
\(T^c_\alpha y\xrightarrow[\alpha\to 0]{} \pseudoinv{A}y\) in \(L^\infty(\mathcal{M})\) for all \(y\in V_\eta\),
and Theorem~\ref{thm:spectralfilter_pwlinfapprox_rate} provides a convergence rate for \(y\in V_{2\eta+2\beta} \), as long as $\beta>0$ is such that
$$
c_n \leq C \sigma_n^{-\beta}
$$
for some $C>0.$ Thus, for $\eta \leq 2,$ the choice $c_n=c_0>0$ allows for any $\beta>0$ and for $\eta >2,$ setting $c_n = C \sigma_n^{1-\eta/2}$ one can choose $\beta = -1+\eta/2.$
Therefore if \(y\in V_{2 \eta+2 \beta}\), with $\beta$ appropriately chosen depending on $\eta,$ and $r\in Y$ is the noise in the measurements with $\|r\|_Y\le\delta$, we have
\begin{align*}
    \smallnorm{T^{c}_{\alpha(\delta)} (y+r) - \pseudoinv{A}y}_{L^\infty(\mathcal{M})}
    &\leq
    \smallnorm{T^{c}_{\alpha(\delta)} y - \pseudoinv{A}y}_{L^\infty(\mathcal{M})}
    +
    \smallnorm{T^{c}_{\alpha(\delta)} r}_{L^\infty(\mathcal{M})}\\
    &\leq
    C\alpha(\delta)^{\min\{1,\eta/4\}}
    +
    \frac{C'}{\alpha(\delta)}
    \delta\\
    &=
    C'' \delta^{\min\{1/2,\eta/8\}}
\end{align*}
with the choice \(\alpha(\delta)=\sqrt{\delta}\).

\subsection{A numerical example}

We revisit Example \ref{ex:singularconvolution} to visualize the differences in approximation with 
\(T^{\mathrm{Tikh}}_\alpha\) and \(T^{c}_\alpha\).
The operator \(A\colon L^2(\T)\to L^2(\T)\) is a convolution with a singular kernel \(k\in L^1(\T)\setminus L^2(\T)\),
\begin{equation*}
    Ax (t)= \left(k*x\right)(t) = \int_\T k(s) x(t-s)\di s
\end{equation*}
for all \(t\in\T\).
We choose a real valued kernel with
\begin{equation}
\label{eq:singularkernel}
    \fhat{k}[n] = (n+1)^{-1/3}
\end{equation}
for \(n\geq 0\).
As shown in Example \ref{ex:singularconvolution} (with \(\rho=1/3\)), this leads to a Tikhonov regularizer that admits adversarial perturbations.

The SVD of \(A\) satisfies (\ref{eq:spectralrelativegrowth2})
for any \(\eta > 3\).
We choose weights for the modified regularizer 
according to Proposition~\ref{thm:spectralfilterbounded},
using \(\eta=4\),
\begin{equation*}
    c_n = \sigma_n^{1-
    \eta/2}
    =
    \sigma_n^{-1}
    =
    (\abs{n}+1)^{1/3}
\end{equation*}
for \(n\in\Z\).
By Proposition~\ref{thm:spectralfilterbounded},
the resulting operator, \(T^{c}_\alpha\)
is bounded
\(L^2(\T)\to L^\infty(\T)\)
and by Theorem \ref{thm:spectralfilter_pwuniformapprox} it is guaranteed to converge in \(L^\infty(\T)\) to \(\pseudoinv{A}\) pointwise on the subspace \(U\subseteq Y\) from \eqref{eq:uniformsourceset}.
By Theorem \ref{thm:spectralfilter_pwlinfapprox_rate},
we have
\begin{equation*}
    \smallnorm{\pseudoinv{A} y - T^c_\alpha y}_{L^\infty(\mathcal{M})}
        \leq 
        C \alpha^{\min\{1,\eta/4\}}
        =
        C \alpha
\end{equation*}
for
\(y\in V_{2\eta + 2(\eta/2-1)} = V_{10}\)
and some constant \(C>0\).

We note that 
for any 
\(s>1/2+\rho=5/6\),
the Sobolev space \(H^s(\T)\) is a subspace of \(U\).
Namely,
there exist  constants \(C,C'>0\) such that
\begin{align*}
    \sum_{n=1}^\infty \frac{\abs{\langle y,u_n\rangle_Y}}{\sigma_n}\smallnorm{v_n}_{L^\infty(\T)}
    &
    =
    \sum_{n\in\Z} \left(\abs{\fhat{y}[n]}/\abs{\fhat{k}[n]}\right)\cdot 1\\
    &
    \leq
    C
    \sum_{n\in\Z} \abs{n^\rho\ \fhat{y}[n]}\\
    &
    =
    C
    \sum_{n\in\Z} \abs{n}^{-1/2-\epsilon} \abs{n}^{1/2+\epsilon+\rho} \abs{\fhat{y}[n]}\\
    &
    \leq
    C
    \norm{\left(\abs{n}^{-1/2-\epsilon}\right)_{n\in\Z}}_{\ell^2(\Z)}
    \norm{\left(\abs{n}^{1/2+\rho+\epsilon}\abs{\fhat{y}[n]}\right)_{n\in\Z}}_{\ell^2(\Z)}\\
    &
    \leq
    C'
    \norm{y}_{H^{1/2+\rho+\epsilon}(\T)}
\end{align*}
for every \(\epsilon>0\) and every \(y\in L^2(\T)\).
Likewise, we can characterize the source sets \((V_\eta)_{\eta>0}\) as follows:
\begin{align*}
    P_\eta(y) & = \sum_{n=1}^\infty \frac{\abs{\langle y, u_n\rangle_Y}^2}{\sigma_n^{2+\eta}}\\
    &\leq
    C \sum_{n\in\Z} \abs{n}^{\rho(2+\eta)}\abs{\fhat{y}[n]}^2\\
    &= C\norm{\left(\abs{n}^{\rho+\eta\rho/2}\abs{\fhat{y}[n]}\right)_{n\in\Z}}_{\ell^2(\Z)}\\
    &
    \leq
    C
    \norm{y}_{H^{\rho + \eta\rho/2}(\T)},
\end{align*}
so
\(H^{\rho+\eta\rho/2}(\T)=H^{(2+\eta)/6}(\T)\subseteq V_\eta\) for all \(\eta>0\).

Figure \ref{fig:perconv-kernel} shows the singular kernel \(k\) along with signals \(x\in L^2(\T)\) satisfying three different smoothness criteria, and in
Figs.~\ref{fig:perconv-test0}, \ref{fig:perconv-test1}, and \ref{fig:perconv-test2},
we show regularized solutions of \(y=k*x\).
An adversarial perturbation is added to \(y\), which is constructed as explained in Example \ref{ex:singularconvolution}.
We see that Tikhonov regularization struggles to suppress the resulting spike, while the effects are milder for \(T^c_\alpha\).
In addition, we show a truncated SVD approximation of the solution, \(T^{\mathrm{TSVD}}_{\alpha}\), which can approximate smooth signals uniformly but is ill-suited for rough signals. In particular, the reconstruction in Fig.~\ref{fig:perconv-test1} fails to well-approximate the corners at the points of no differentiability, and the reconstruction in Fig.~\ref{fig:perconv-test2} suffers from the usual Gibbs phenomenon at the points of discontinuity. In both cases, the reconstructions provided by $T^c_\alpha$ are superior in quality. It is worth observing that the discontinuous function $x$ of Fig.~\ref{fig:perconv-test2} does not satisfy the assumptions of Theorem~\ref{thm:spectralfilter_pwuniformapprox}, and so uniform convergence is not guaranteed by the theory: it could serve as a motivation for extending the bounds obtained in this work to a larger class of functions.

\begin{figure}
    \centering
    \includegraphics[width=0.7\textwidth]{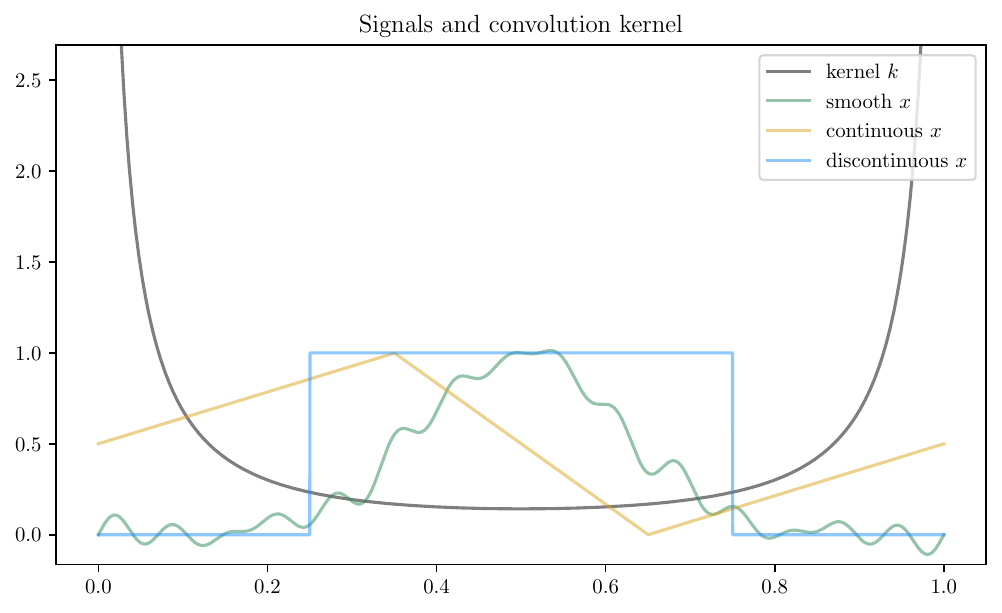}
    \caption{
    The kernel \(k\) for the forward operator \(Ax = k*x\), defined by \eqref{eq:singularkernel},
    and the three signals of Figs.~\ref{fig:perconv-test0}-\ref{fig:perconv-test2}.
    }
    \label{fig:perconv-kernel}
\end{figure}

\begin{figure}
    \centering
    \includegraphics[width=\textwidth]{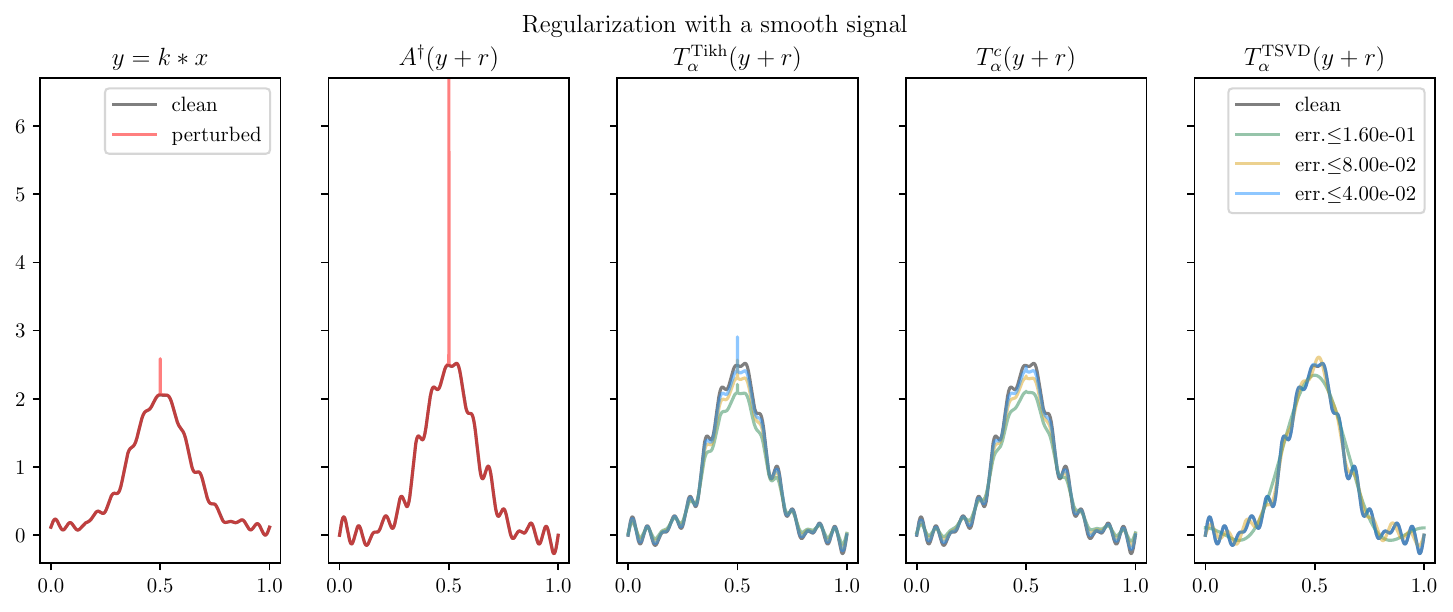}
    \caption{
    Regularization of adversarially perturbed measurements \(y + r\),
    with \(\norm{r}_{L^2(\T)} = 0.005\cdot\norm{y}_{L^2(\T)}\).
    The measurements are \(y=k*x\), where \(x\colon \T\to\R\) is an oscillatory smooth function.
    For each regularization method, \(T_\alpha=T^{\mathrm{Tikh}}_{\alpha},\ T^{c}_{\alpha},\ T^{\mathrm{TSVD}}_{\alpha}\), the parameter \(\alpha\) is chosen small enough so that the relative \(L^2(\T)\)-error, \(\smallnorm{\pseudoinv{A} y - T_\alpha y}_{L^2(\mathcal{\T})}/\norm{y}_{L^2(\mathcal{\T})}\), is less than \(0.16\), \(0.08\), and \(0.04\).}
    \label{fig:perconv-test0}
\end{figure}

\begin{figure}
    \centering
    \includegraphics[width=\textwidth]{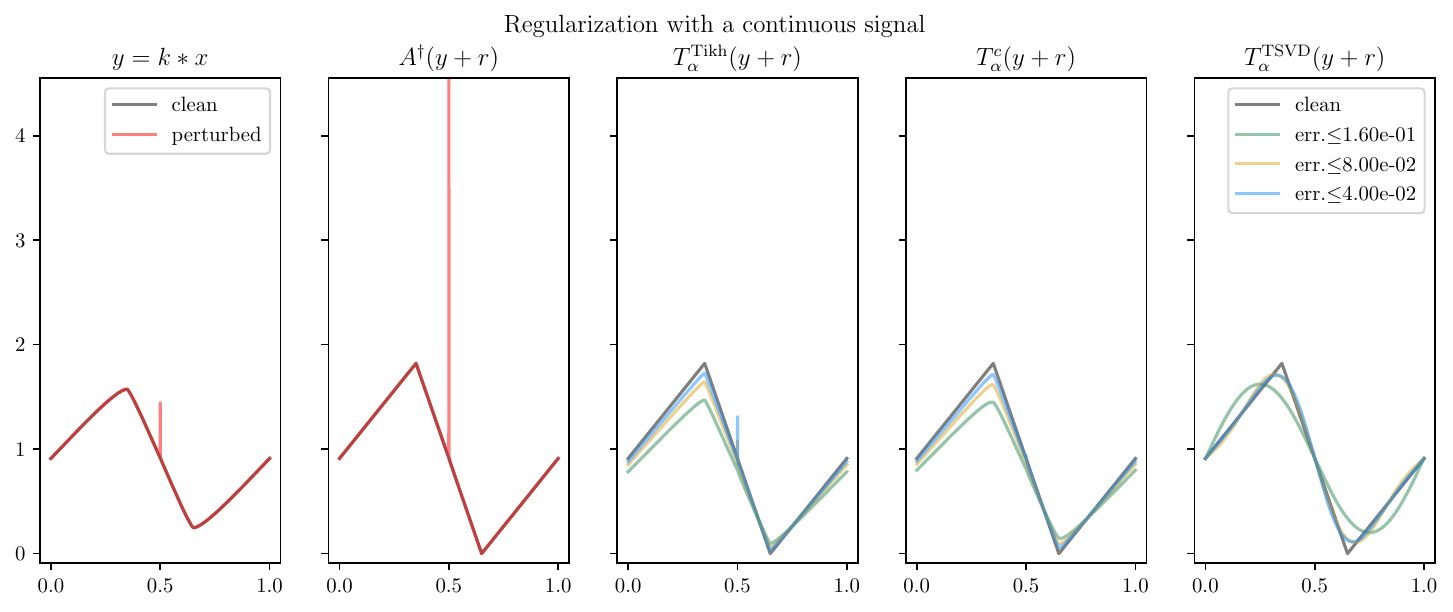}
    \caption{
    Analogous to Fig.~\ref{fig:perconv-test0} but with \(x\colon \T\to\R\) a continuous piece-wise linear function.}
    \label{fig:perconv-test1}
\end{figure}

\begin{figure}
    \centering
    \includegraphics[width=\textwidth]{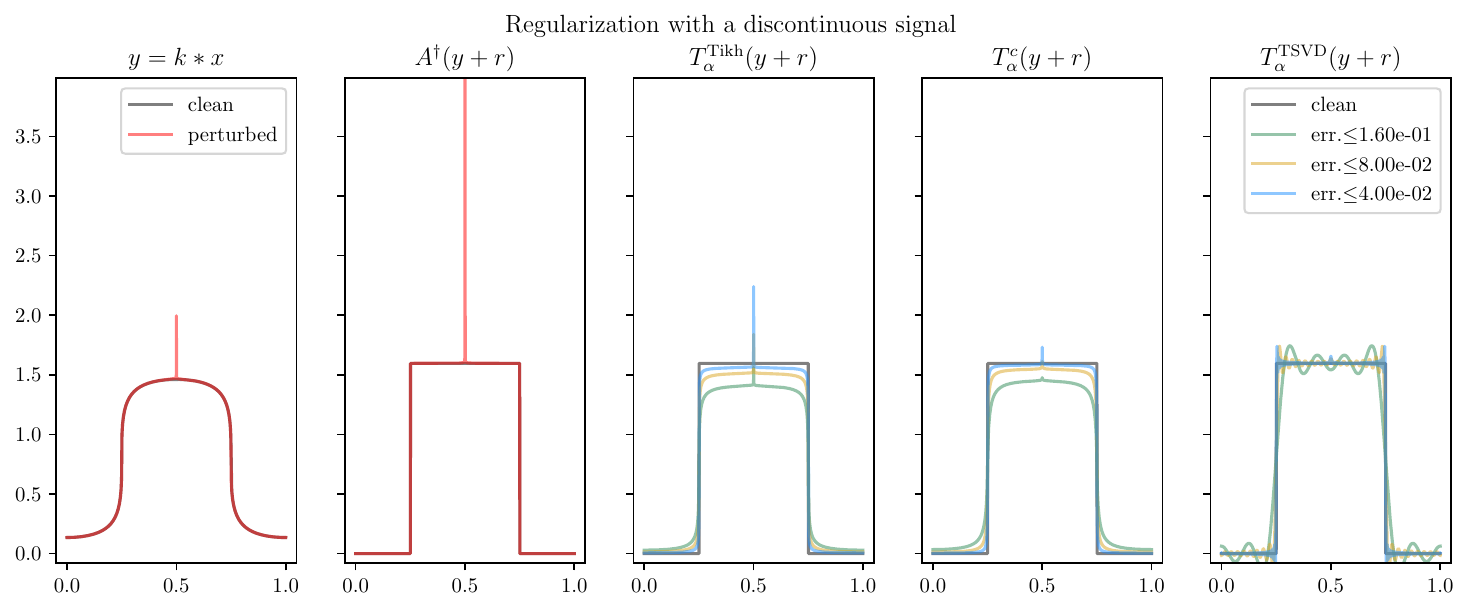}
    \caption{
    Analogous to Fig.~\ref{fig:perconv-test0} but with \(x\colon \T\to\R\) a discontinuous piece-wise constant function.}
    \label{fig:perconv-test2}
\end{figure}

\section*{Acknowledgements}
We are happy to thank Fabio Nicola for useful discussions on specific examples for the wave equation.  Co-funded by the European Union (ERC, SAMPDE, 101041040 and Next Generation EU, Missione 4 Componente 1 CUP D53D23005770006). Views and opinions expressed are however those of the authors only and do not necessarily reflect those of the European Union or the European Research Council. Neither the European Union nor the granting authority can be held responsible for them. This material is based upon work supported by the Air Force Office of Scientific Research under award numbers FA8655-20-1-7027 and FA8655-23-1-7083. The research was supported in part by the MIUR Excellence Department Project awarded to Dipartimento di Matematica, Università di Genova, CUP D33C23001110001.
GSA is a  member of the ``Gruppo Nazionale per l’Analisi Matematica, la Probabilità e le loro Applicazioni'', of the ``Istituto Nazionale di Alta Matematica''. 

\bibliography{ref.bib}

\appendix

\section{Proofs}\label{sec:appendix_proofs}

We collect here some proofs and auxiliary results.\bigskip

\noindent\textbf{Proof of Theorem \ref{thm:advseqadvpert}.}
    We assume that \(A(X\cap X')\subseteq Y\cap Y'\)
    (i.e.\ no adversarial perturbation exists)
    and show that no adversarial sequence can exist.
    
    The intersections \(U = X\cap X'\) and \(V = Y\cap Y'\) are Banach spaces when equipped with the norms
    \begin{equation*}
      \norm{\cdot}_{U}= \max\{ \norm{\cdot}_X,\norm{\cdot}_{X'}\} 
      \quad\text{and}\quad
      \norm{\cdot}_{V}= \max\{ \norm{\cdot}_Y,\norm{\cdot}_{Y'}\}.
    \end{equation*}
    We show that the operator \(A\) is bounded with respect to these norms by using the Closed Graph Theorem.
    Let \(((x_n,y_n))_{n\in\N}\) be a sequence in the graph of \(\restrict{A}{U}\),
    \begin{equation*}
        \Gamma = \left\{(x,y)\in U\times V\colon y=Ax\right\},
    \end{equation*}
    that converges to a point \((u,v)\in U\times V\) in the product norm
    \begin{equation*}
        \norm{(x,y)}_{U\times V}
        = \norm{x}_U + \norm{y}_V,
        \quad
        \forall x\in X\ \forall y\in Y.
    \end{equation*}
    Since
    \begin{equation*}
        \norm{x_n - u}_X
        \leq
        \norm{x_n - u}_U
        \leq\norm{(x_n,y_n) - (u,v)}_{U\times V}
        \xrightarrow[n \to \infty]{} 0,
    \end{equation*}
    we have
    \begin{equation*}
        \norm{y_n - Au}_Y =\norm{Ax_n-Au}_Y \leq \norm{A}_{X\to Y} \norm{x_n - u}_X \xrightarrow[n \to \infty]{} 0.
    \end{equation*}
    Since
    \begin{equation*}
        \norm{y_n - v}_Y \leq \norm{y_n-v}_V \leq \norm{(x_n,y_n) - (u,v)}_{U\times V}
        \xrightarrow[n \to \infty]{} 0,
    \end{equation*}
    we can see that
    the sequence \((y_n)_{n\in\N}\) converges to both \(Au\) and \(v\) in \(Y\).
    This means that \(v=Au\), and hence \((u,v)\in\Gamma\).
    We have shown that \(\Gamma\) is a closed subspace of \(U\times V\),
    and thus that \(A\) is bounded with respect to \(\norm{\cdot}_U\) and \(\norm{\cdot}_V\).
    
    The result now follows.
    Since
    \begin{equation*}
        \norm{Ax}_{Y'} \leq \norm{Ax}_V
        \leq \norm{A}_{U\to V} \norm{x}_U
    \end{equation*}
    for all \(x\in X\),
    any sequence in \(U=X\cap X'\) that is bounded in both \(X\) and \(X'\) has a bounded image in \(Y'\) and hence is  not an adversarial sequence.
\qed\bigskip

The following is an adaptation of Theorem 4.2 from \cite{engl2000regularization} for the specific case of Tikhonov regularization (for which the constant $C$ therein can be chosen equal to $1$).

\begin{lemma}
    \label{thm:tikhonovbounded}
    For all \(y\in Y\), we have
    \begin{equation*}
    \smallnorm{T^{\mathrm{Tikh}}_{\alpha}y}_X
    \leq
    \frac{1}{\sqrt{\alpha}}\smallnorm{y}_Y.
    \end{equation*}
\end{lemma}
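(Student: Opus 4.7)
The plan is to expand $T^{\mathrm{Tikh}}_\alpha y$ in the orthonormal basis $(v_n)_{n\in\N}$ of $\overline{\range{A^*}}\subseteq X$, apply Parseval, bound each spectral factor, and then use Bessel's inequality for the coefficients $\langle y,u_n\rangle_Y$ in $Y$.

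Concretely, the first step is to use the SVD expansion of $T^{\mathrm{Tikh}}_\alpha$ already established in the paper, which, by orthonormality of $(v_n)_{n\in\N}$ in $X$, gives
\begin{equation*}
    \smallnorm{T^{\mathrm{Tikh}}_\alpha y}_X^2
    =
    \sum_{n=1}^\infty
    \frac{\sigma_n^2}{\left(\sigma_n^2+\alpha\right)^2}
    \abs{\langle y,u_n\rangle_Y}^2.
\end{equation*}
The second step is to bound the spectral filter pointwise. Writing
\begin{equation*}
    \frac{\sigma_n^2}{\left(\sigma_n^2+\alpha\right)^2}
    =
    \frac{\sigma_n^2}{\sigma_n^2+\alpha}\cdot\frac{1}{\sigma_n^2+\alpha}
    \leq 1\cdot\frac{1}{\alpha}
    =\frac{1}{\alpha},
\end{equation*}
we obtain the estimate. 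The final step is Bessel's inequality applied to the orthonormal system $(u_n)_{n\in\N}\subseteq Y$, yielding $\sum_n \abs{\langle y,u_n\rangle_Y}^2\leq \norm{y}_Y^2$. Combining the three steps and taking a square root gives $\smallnorm{T^{\mathrm{Tikh}}_\alpha y}_X\leq \alpha^{-1/2}\norm{y}_Y$.

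There is no real obstacle here; the only minor choice is which elementary bound to use on $\sigma_n^2/(\sigma_n^2+\alpha)^2$. The factorization above is the cleanest and directly produces the constant $1/\sqrt{\alpha}$ claimed in the lemma (an AM-GM bound would give the slightly sharper $1/(2\sqrt{\alpha})$, but this is unnecessary for the application in \eqref{eq:spectralbounded}).
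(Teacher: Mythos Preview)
Your proof is correct and, in fact, more direct than the paper's. The paper argues abstractly via the identity $T^{\mathrm{Tikh}}_\alpha = A^*(AA^*+\alpha I)^{-1}$ and then bounds $\smallnorm{T^{\mathrm{Tikh}}_\alpha y}_X^2 = \langle AA^*(AA^*+\alpha I)^{-1}y,(AA^*+\alpha I)^{-1}y\rangle_Y$ using the operator norm estimates $\smallnorm{AA^*(AA^*+\alpha I)^{-1}}_{Y\to Y}\le 1$ and $\smallnorm{(AA^*+\alpha I)^{-1}}_{Y\to Y}\le 1/\alpha$. Your approach bypasses the functional calculus entirely by working coefficientwise in the SVD, which is already the framework the paper uses for $T^c_\alpha$; the pointwise bound $\sigma_n^2/(\sigma_n^2+\alpha)^2\le 1/\alpha$ is precisely the diagonal version of the paper's two operator norm bounds combined. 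The paper's route has the advantage of not requiring an explicit SVD (it would apply to any bounded $A$), while yours is shorter and meshes better with the surrounding SVD-based arguments.
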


\begin{proof}
The proof of Theorem 4.2 in \cite{engl2000regularization} establishes that 
$$
\left\| A A^* (A A^* + \alpha I)^{-1} \right\|_{Y \to Y} \leq 1
$$
and 
$$
\left\|(A A^* + \alpha I)^{-1} \right\|_{Y \to Y} \leq \frac{1}{\alpha}.
$$
Noting that $T_\alpha^{\mathrm{Tikh}} = (A^* A + \alpha I)^{-1} A^*$ and employing the fact
$$
\left(A^* A + \alpha I\right)^{-1} A^* = A^* \left(A A^* + \alpha I \right)^{-1},
$$ we obtain
\begin{align*}
\left\|T_\alpha^{\mathrm{Tikh}} y \right\|^2_X &= \langle A^* \left( A A^* + \alpha I \right)^{-1} y, A^* \left(A A^* + \alpha I \right)^{-1} y\rangle_X\\
& = \langle A A^* \left( A A^* + \alpha I \right)^{-1} y, \left( A A^* + \alpha I \right)^{-1} y \rangle_Y\\
 &\leq \left\| A A^* (A A^* + \alpha I)^{-1} \right\|_{Y \to Y} \left\| (A A^* + \alpha I)^{-1} \right\|_{Y \to Y} \left\|y\right\|_Y^2\\
 &\leq \frac{1}{\alpha} \left\|y\right\|_Y^2.
\end{align*}
\end{proof}
\noindent\textbf{Proof of Lemma \ref{thm:spectralfilter_ratebound}.}
    We derive:
    \begin{align*}
        h'(\sigma) &= \frac{(\eta-4) \, \sigma^{\eta+1} + \alpha \, \eta \, \sigma^{\eta-1}}{(\sigma^2+\alpha)^3}.
    \end{align*}
    This is strictly positive for \(\eta \geq 4\). For $\eta<4$, 
    $h'(\sigma)=0$ if and only if
    \begin{equation*}
        \sigma^2 = \frac{\eta\alpha}{4-\eta},
    \end{equation*}
    which is a maximum (by straightforward calculation, $h''(\sqrt{\frac{\eta \alpha}{4-\eta}})<0$). 
    We evaluate:
    \begin{align*}
        h\left(\sqrt{\frac{\eta\alpha}{4-\eta}}\right)
        &=
        \left(\frac{\eta\alpha}{4-\eta}\right)^{\eta/2}
        \frac{1}{\left(\frac{\eta\alpha}{4-\eta} + \alpha\right)^2}
        \\
        &=
        \alpha^{\eta/2}
        \left(\frac{\eta}{4-\eta}\right)^{\eta/2}
        \frac{1}{\alpha^2\left(\frac{\eta}{4-\eta} + 1\right)^2}
        \\
        &=
        C_\eta
        \alpha^{\eta/2-2},
    \end{align*}
    where $C_\eta = \left(\frac{\eta}{4-\eta}\right)^{\eta/2}  \left(\frac{\eta}{4-\eta} + 1\right)^{-2}.$
\qed
\end{document}